\crefname{hypothesis}{Hypothesis}{Hypotheses}
\crefname{assumption}{Assumption}{Assumptions}
\Crefname{ALC@unique}{Line}{Lines}
\colorlet{texcscolor}{blue!50!black}
\colorlet{texemcolor}{red!70!black}
\colorlet{texpreamble}{red!70!black}
\colorlet{codebackground}{black!25!white!25}
\lstdefinestyle{siamlatex}{%
  style=tcblatex,
  texcsstyle=*\color{texcscolor},
  texcsstyle=[2]\color{texemcolor},
  keywordstyle=[2]\color{texemcolor},
  moretexcs={cref,Cref,maketitle,mathcal,text,headers,email,url},
}
\DeclareTotalTCBox{\code}{ v O{} }
{ %
  fontupper=\ttfamily\color{black},
  nobeforeafter,
  tcbox raise base,
  colback=codebackground,colframe=white,
  top=0pt,bottom=0pt,left=0mm,right=0mm,
  leftrule=0pt,rightrule=0pt,toprule=0mm,bottomrule=0mm,
  boxsep=0.5mm,
  #2}{#1}
\patchcmd\newpage{\vfil}{}{}{}
\newcommand{\phim}{\hat\phi}
\newcommand{\hzeta}{\hat\zeta}
\newcommand{\nablax}{\nabla_{\!x}}
\newcommand{\nablaq}{\nabla_{\!q}}
\newcommand{\init}{\hpsi_k^0}
\newcommand{\pt}{\partial_t}
\newcommand{\eps}{\varepsilon}
\newcommand{\pta}{\p_t^\alpha}
\newcommand{\ptb}{\p_t^{1-\alpha}}
\newcommand{\ga}{g_{\alpha}}
\newcommand{\gb}{g_{1-\alpha}}
\newcommand{\ds}{\,\textup{ds}}
\renewcommand{\div}{\textup{div}}
\newcommand{\dd}{\mathop{}\!\mathrm{d}}
\renewcommand{\d}{\mathop{}\!\mathrm{d}}
\newcommand{\ddt}{\frac{\dd}{\dd\mathrm{t}}}
\newcommand{\dt}{\,\textup{d}t}
\newcommand{\p}{\partial}
\newcommand{\R}{\mathbb{R}} %
\renewcommand{\rho}{\varrho}
\newcommand{\con}{\hookrightarrow}
\newcommand{\com}{\mathrel{\mathrlap{{\mspace{4mu}\lhook}}{\hookrightarrow}}}
\newcommand{\W}{\mathcal{W}} %
\renewcommand{\H}{\mathcal{H}} %
\newcommand{\C}{\mathcal{C}} %
\newcommand{\HS}{\mathcal{H}} %
\newcommand{\HSV}{\mathcal{V}} %
\newcommand{\longweak}{\relbar\joinrel\rightharpoonup}
\newcommand{\dq}{\,\text{d}q}
\newcommand{\hpsi}{{\widehat{\psi}}}
\newcommand{\hphi}{{\widehat{\phi}}}
\newcommand{\hY}{{\widehat{\mathcal{Y}}}}
\newcommand{\hX}{{\widehat{\mathcal{X}}}}
\newcommand{\hZ}{{\widehat{\mathcal{Z}}}}
\renewcommand{\hat}{\widehat}
\title{Analysis of a dilute polymer model with a time-fractional derivative\thanks{Submitted to the editors \today.
\funding{MF is supported by the State of
Upper Austria.}}}
\author{Marvin Fritz\thanks{Computational Methods for PDEs, Johann Radon Institute for Computational and Applied Mathematics, Altenberger Str. 69, 4040 Linz, Austria (\email{marvin.fritz@ricam.oeaw.ac.at}).}
\and Endre S\"uli\thanks{Mathematical Institute, University of Oxford, Andrew Wiles Building, Woodstock Road, Oxford OX2 6GG, United Kingdom (\email{endre.suli@maths.ox.ac.uk}).}
\and Barbara Wohlmuth\thanks{Department of Mathematics, Technical University of Munich, Boltzmannstr. 3,
85748 Garching bei M\"unchen, Germany (\email{wohlmuth@ma.tum.de})}}
\title{Analysis of a dilute polymer model with a time-fractional derivative\thanks{Submitted to the editors \today.
\funding{MF is supported by the State of
Upper Austria.}}}
\author{Marvin Fritz\thanks{Computational Methods for PDEs, Johann Radon Institute for Computational and Applied Mathematics, Altenberger Str. 69, 4040 Linz, Austria (\email{marvin.fritz@ricam.oeaw.ac.at}).}
\and Endre S\"uli\thanks{Mathematical Institute, University of Oxford, Andrew Wiles Building, Woodstock Road, Oxford OX2 6GG, United Kingdom (\email{endre.suli@maths.ox.ac.uk}).}
\and Barbara Wohlmuth\thanks{Department of Mathematics, Technical University of Munich, Boltzmannstr. 3,
85748 Garching bei M\"unchen, Germany (\email{wohlmuth@ma.tum.de})}}
\begin{document}
\maketitle

\begin{tcbverbatimwrite}{tmp_\jobname_abstract.tex}
\begin{abstract}
We investigate the well-posedness of a coupled Navier--Stokes--Fokker--\-Planck system with a time-fractional derivative. Such systems arise in the kinetic theory of dilute solutions of polymeric liquids, where the motion of noninteracting polymer chains in a Newtonian solvent is modelled by a stochastic process exhibiting 
power-law waiting time, in order to capture subdiffusive processes associated with non-Fickian diffusion.
We outline the derivation of the model from a subordinated Langevin equation. The elastic properties of the polymer molecules immersed in the solvent are modelled by a finitely extensible nonlinear elastic (FENE) dumbbell model, and the drag term in the Fokker--Planck equation is assumed to be corotational.
We prove the global-in-time existence of large-data weak solutions to this time-fractional model of order $\alpha \in (\tfrac12,1)$, and derive an energy inequality satisfied by weak solutions.
\end{abstract}

\begin{keywords}
Existence of weak solutions; Navier--Stokes--Fokker--Planck system; dilute polymer model; Hookean and FENE-type bead-spring-chain model; Riemann--Liouville fractional derivative; time-fractional PDE
\end{keywords}

\begin{MSCcodes}
35Q30, 35Q84, 35R11, 60G22, 82C31, 82D60
\end{MSCcodes}
\end{tcbverbatimwrite}
\begin{abstract}
We investigate the well-posedness of a coupled Navier--Stokes--Fokker--\-Planck system with a time-fractional derivative. Such systems arise in the kinetic theory of dilute solutions of polymeric liquids, where the motion of noninteracting polymer chains in a Newtonian solvent is modelled by a stochastic process exhibiting
power-law waiting time, in order to capture subdiffusive processes associated with non-Fickian diffusion.
We outline the derivation of the model from a subordinated Langevin equation. The elastic properties of the polymer molecules immersed in the solvent are modelled by a finitely extensible nonlinear elastic (FENE) dumbbell model, and the drag term in the Fokker--Planck equation is assumed to be corotational.
We prove the global-in-time existence of large-data weak solutions to this time-fractional model of order $\alpha \in (\tfrac12,1)$, and derive an energy inequality satisfied by weak solutions.
\end{abstract}

\begin{keywords}
Existence of weak solutions; Navier--Stokes--Fokker--Planck system; dilute polymer model; Hookean and FENE-type bead-spring-chain model; Riemann--Liouville fractional derivative; time-fractional PDE
\end{keywords}

\begin{MSCcodes}
35Q30, 35Q84, 35R11, 60G22, 82C31, 82D60
\end{MSCcodes}

		\section{Introduction}
	
This paper is concerned with the existence of weak solutions to a system of nonlinear partial differential equations that arises in the kinetic theory of dilute solutions of polymeric fluids. Within this class of models we focus on finitely-extensible nonlinear elastic, FENE-type, dumbbell models with a corotational drag term. In contrast to previous literature on the analysis of these models we assume power law waiting times in the derivation of the system, which results in the appearance of a time-fractional derivative in the Fokker--Planck equation describing the evolution of the probability density function. This raises new questions about the study of well-posedness, and we provide rigorous results concerning the existence of global-in-time weak solutions to the system of partial differential equations featuring in the model.

Dilute polymer models are derived and extensively described in the monograph \cite{bird1987dynamics2} and in the book by \"Ottinger \cite{ottinger2012stochastic}; see also \cite{suli2018mckeanvlasov} for a mathematically rigorous derivation of the Hookean bead-spring-chain model from Brownian dynamics. We also refer to the papers \cite{lemou2002viscoelastic,herrchen1997a} for a comparison of several FENE-type dumbbell models. Such systems are of microscopic-macroscopic type since they involve a coupling of the (macroscopic) Navier--Stokes equations for the description of incompressible fluid flow and the Fokker--Planck equation for the microscopic processes associated with 
the statistical properties of polymer molecules immersed in the fluid. 
Concerning the weak and strong well-posedness of FENE-type models, we refer to the works \cite{jourdain2004existence,kreml2010on,masmoudi2013global,zhang2006local,renardy1991an}. More general dilute polymer models are analyzed in \cite{barrett2005existence,barrett2007existence,barrett2008existence,barrett2010existence,barrett2010existence2}. Further, we mention the papers \cite{lions2000global,lions2007global,schonbek2009existence,masmoudi2008well,barrett2005existence,barrett2009numerical,debiec2023corotational,lin2008global,busuioc2014fene}, which, similarly to the discussion herein, are concerned with dumbbell models that assume a corotational drag term in the Fokker--Planck equation.  In such models it is supposed that polymer molecules are not stretched by the surrounding solvent, although they are allowed to rotate without stretching; see, for example, \cite{la2020diffusive}.

Time-fractional differential equations have been the focus of considerable attention in the mathematical and engineering literature in recent years. Such equations are nonlocal in time and have an innate history effect. They are of relevance in applications where memory effects are present and hereditary properties of materials are studied; see, for example, the textbooks on viscoelasticity \cite{mainardi2022fractional,yang2020general}, hydrology \cite{su2020fractional}, financial economics \cite{fallahgoul2016fractional}, and mechanical processes \cite{atanackovic2014fractional,pilipovic2014fractional}. The time-fractional Fokker--Planck system, in particular, allows subdiffusive behaviour and has been previously studied in \cite{metzler1999anomalous,metzler1999deriving,metzler2000random,barkai2000continuous,barkai2001fractional,henry2006anomalous,henry2010fractional,henry2010introduction,langlands2008anomalous} with regards to its derivation and applicability. The articles \cite{pinto2017numerical,le2016numerical,le2018a,le2019existence,le2021alpha} have investigated the numerical analysis and the simulation of solutions to the time-fractional Fokker--Planck equation. The time-fractional model considered herein has been explored computationally in \cite{beddrich2023numerical}, albeit in the simpler setting of a linear (Hookean) elastic spring force instead of the FENE spring model that we study here.
 
We employ a spatial Galerkin approximation in conjunction with a compactness argument to prove the existence of weak solutions to the time-fractional Navier--Stokes--Fokker--Planck system under consideration. More specifically, we discretize the system in space and derive appropriate energy bounds, which then enable us to pass to the limit in the discretized system.  Spatial discretizations of dilute polymer models were previously considered  in  \cite{barrett2009numerical,barrett2011finite,barrett2012finite}. In addition, weak solutions to time-fractional PDEs have been investigated using the Galerkin approach in the publications \cite{fritz2021sub,fritz2020time,fritz2021equivalence}. There have also been initial steps in the analysis of a decoupled time-fractional Fokker--Planck equation with time-dependent forces; see the papers \cite{fritz2023well,mclean2020regularity,le2019existence,le2021alpha,mclean2021uniform}. However, the coupling of the time-fractional Fokker--Planck equation to the Navier--Stokes system gives rise to new technical complications, which have not been addressed previously.

In \Cref{Sec:Derivation} we derive the model from the Langevin equation assuming power-law waiting time. In this way time-fractional derivatives in the sense of Riemann--Liouville appear in the associated  Fokker--Planck equation. By mimicking the technique for the derivation of the standard dumbbell model, a time-fractional Navier--Stokes--Fokker--Planck system is obtained.  In \Cref{Sec:Prelim} we introduce several function spaces of Sobolev-type and recall some important results from the theory of fractional derivatives, including chain inequalities and embedding theorems. In \Cref{Sec:Form} we transform the model in order to make it amenable to the subsequent analysis.  We then equip the model with suitable initial and boundary conditions and we make use of the associated Maxwellian to rescale the Navier--Stokes--Fokker--Planck system. In \Cref{Sec:Analysis} we finally state and prove a theorem asserting the existence of large-data global-in-time weak solutions to the model with a time-fractional derivative of order $\alpha \in (\tfrac12,1)$.

 	 	\section{Derivation of the time-fractional FENE-type system} \label{Sec:Derivation}

In this section we derive the time-fractional Navier--Stokes--Fokker--Planck model, admitting both Hookean and FENE-type bead-spring-chains. The classical Hookean bead-spring-chain model is derived from a system of stochastic differential equations; see, for example, the articles \cite{suli2018mckeanvlasov,barrett2007existence} and the thesis \cite[Section 1.3]{ye2018numerical}. The time-fractional Fokker--Planck equation is derived from a Langevin equation in \cite{magdziarz2009stochastic}. We shall emphasize the differences in the derivation of the time-fractional Navier--Stokes--Fokker--Planck model and examine the steps where the time-fractional derivative is introduced. 

In this work, the time-fractional derivative of order $\alpha \in (0,1)$ is understood in the sense of Riemann--Liouville and is given by 
\begin{equation} \label{Eq:RL} 
	\pta w(t) := \pt \int_0^t \frac{(t-s)^{-\alpha}}{\Gamma(1-\alpha)} w(s) \ds,\end{equation} 
where $\Gamma(\alpha):=\int_0^\infty t^{\alpha-1}e^{-t}\dt$ is Euler's Gamma function.


\subsection{Derivation} We shall idealize each polymer molecule as a pair of massless beads connected by a massless elastic spring. It is assumed that the resulting, so called, \textit{dumbbell} is suspended in a Newtonian solvent, whose motion is governed by the incompressible Navier--Stokes equations for the velocity $u$ and the pressure $p$ of the fluid. Let us denote the position vectors (with respect to an arbitrary, but fixed, reference point in $\mathbb{R}^3$) of the centers of mass of the two beads at time $t$ by $x_i(t) \in \R^3$ for $i \in \{1,2\}$. At time $t$, the center of mass of the dumbbell is at $x_c(t):=\frac12\big(x_1(t)+x_2(t)\big)$ and the elongation (or conformation) vector pointing from $x_1(t)$ to $x_2(t)$ is $q_1(t):=x_2(t)-x_1(t)$. We denote the vector pointing in the opposite direction by $q_2(t):=-q_1(t)$ and assume that $q_1(t)$ and $q_2(t)$ are contained, for all $t \geq 0$, within a given convex open set $D \subset \R^3$ that satisfies $0 \in D$, and $-q \in D$ whenever $q\in D$.

Three kinds of force act on the $i$-th bead in the dumbbell suspended in the fluid: a drag force (Stokes drag) arising from the motion of the bead through the solvent, an elastic spring force, and a random force, which is assumed to be Brownian, modelling the random collisions of the bead with the molecules of the surrounding Newtonian solvent. As each of the two beads is assumed to be massless, Newton's second law implies that
\begin{equation} \label{Eq:Newton} \text{Drag Force}_i +\text{Spring Force}_i + \text{Brownian Force}_i =0,\quad i=1,2.\end{equation} 
To define the drag force 
 acting on the $i$-th bead of the dumbbell suspended in the fluid, we apply  Stokes' law and get
\begin{eqnarray*}
- \zeta\Big(\ddt x_i(t)-u(x_i,t)\Big),\quad i \in \{1,2\}.
\end{eqnarray*} Here $\zeta$ is the friction coefficient and $u(\cdot,\cdot) $ stands for the fluid velocity. We note that $\zeta$ carries the SI unit $[\text{kg}/\text{s}] $ and is linear in the dynamic viscosity $\eta$. For a sphere of radius $R$ it reads $\zeta = 6 \pi R \eta$.

The elastic spring force $F:D \to \R^3$ of the spring connecting the two beads is $$F(q):=HU'(\tfrac12 |q|^2)q,$$ where $U$ is a nonnegative continuously differentiable potential and $H$ is the spring constant having the SI unit $[\text{kg}/\text{s}^2]$.
In the case of the Hookean dumbbell model the spring force is linear and is given by $F(q)=Hq$ with $q\in D=\R^3$ and the corresponding potential is  $U(s)=s$ for $s \in [0,\infty)$, while in the case of a classical FENE model one has, instead, $$D=B_{|q_\text{max}|}(0), \quad F(q)=\frac{Hq}{1-|q|^2/|q_\text{max}|^2}, \quad U(s)=-\frac{|q_\text{max}|^2}{2}\ln\!\bigg(1-\frac{2s}{|q_\text{max}|^2}\bigg)$$ for $q\in D$ and $s\in [0,\tfrac{1}{2}|q_\text{max}|^2)$, where $B_{|q_\text{max}|}(0)$ is an open ball in $\R^3$ with radius $|q_\text{max}|$ centered at the origin and $|q_\text{max}|>0$ is a strict upper bound on the maximal extension to which a dumbbell can be stretched.

The Brownian force acting on the $i$-th bead at time $t$ is denoted by $B_i(t)$  and is formally defined by 
$$B_i(t):= \sqrt{2k_B\mu_T\zeta}\, \frac{\dd W_i(t)}{\text{d}t },\quad i \in \{1,2\},$$
where $k_B$ is the Boltzmann constant in $[\text{kg m}^2 /(\text{s}^2 \text{K})]$,  $\mu_T$ denotes the  temperature in $[\text{K}]$, and $W(t)=\big(W_1(t),W_2(t)\big)^{\mathrm{T}}$ is a vector of two independent Wiener processes. 
As each Wiener process is distributed according to $\mathcal{N} (0,t)$, we find that the SI unit for $ \frac{\dd W_i(t)}{\text{d}t }$ is $[1/\sqrt{\text{s }}]$.

By introducing the notation
\begin{equation} \label{Def:Vectorb} X(t):=\begin{pmatrix} x_1(t) \\ x_2(t) \end{pmatrix}, \quad b(X(t),t):=\begin{pmatrix} u(x_1(t),t)+\zeta^{-1} F(q_1(t)) \\ u(x_2(t),t)+\zeta^{-1} F(q_2(t)) \end{pmatrix},\end{equation}
and dividing \cref{Eq:Newton} by $\zeta$, we get as impulse balance
 the so-called Langevin equation, an It$\hat{\rm o}$ stochastic differential equation of the form
\begin{equation*} %
\dd X(t) = b(X(t),t) \dt +   \sqrt{\frac{2k_B\mu_T}{\zeta}}       \dd W(t), \quad t \geq 0. \end{equation*}
The partial differential equation describing the evolution of the probability density function 
%
$\tilde\psi:=\tilde\psi(x_1,x_2,t)$
%
of the random variable $X(t)$ is the standard Fokker--Planck equation \cite{pavliotis2014stochastic}:
\begin{equation} \label{eq:stFP}
\pt \tilde\psi= \sum_{i=1}^2 \left( -\div_{x_i}\!\big( b_i((x_1^{\rm{T}},x_2^{\rm {T}})^{\rm T} 
,t)\, \tilde\psi \big)  + \frac{k_B\mu_T}{\zeta} \Delta_{x_i} \tilde\psi \right),
\end{equation}
with $x_1$ and $x_2$ considered to be column-vectors in $\mathbb{R}^3$.

At this point, we deviate from the usual derivation of the Fokker--Planck equation for the evolution of the probability density function of the stochastic process $X$ 
and introduce a subordination of the Langevin equation. This allows us to model trapping events to the motion of the particles. For $\alpha \in (0,1)$,  let $U_\alpha (\cdot)$ be an $\alpha$-dependent subordinator 
with expectation 
$\mathbb{E}[e^{-\lambda U_\alpha(\tau)}]= \text{exp}( -\tau \Phi_\alpha(\lambda) )$, where
\begin{equation*}
\Phi_\alpha(\lambda ) := \tau_0^{\alpha -1} \lambda^\alpha
\end{equation*}
is the so-called Laplace exponent and $\tau_0$ is a characteristic time-scale (to be fixed). We note that the limiting value of $\alpha =1 $ results in the standard integer-order case. The inverse subordinator $S_\alpha ^t $, defined as the first-passage time of $U_\alpha$, is then given by 
\begin{equation*}
S_\alpha^t := \inf_{\tau > 0} \{ \tau \,:\,  U_\alpha(\tau) > t \} .
\end{equation*}
%
Suppose that $Y_\alpha(t)$ is a solution of the stochastic differential equation
\[ \mathrm{d}Y_\alpha(t) =  b(Y_\alpha(t),U_\alpha(t))\mathrm{d}t +   \sqrt{\frac{2k_B\mu_T}{\zeta}}  \mathrm{d} W(t),\quad t \geq 0.
\]
Define $X(t):=Y_\alpha(S_\alpha^t)$. It then follows from Theorem 1 in \cite{Magd2014} that if $b$ is twice continuously differentiable with respect to its
variables and satisfies the usual Lipschitz condition, then the probability density function of the process $X$ is a solution of the time-fractional
Fokker--Planck equation
%
resulting from the replacement of $\tilde \psi$ on the right-hand side  of \eqref{eq:stFP} by $ \tau_0^{1 -\alpha} \ptb \tilde \psi$, i.e., 
$$
\pt \tilde\psi= \sum_{i=1}^2 \left( -\div_{x_i}\!\big( b_i((x_1^{\rm{T}},x_2^{\rm {T}})^{\rm T} 
,t)\,\tau_0^{1 -\alpha} \ptb \tilde\psi \big)  + \frac{k_B\mu_T}{\zeta} \Delta_{x_i}  \tau_0^{1 -\alpha} \ptb \tilde\psi \right).$$

Next, we perform the linear change of variables $(x_1,x_2) \mapsto (\frac{1}{2}(x_1 + x_2), x_2 - x_1)=:(x,q)$, whereby we have identified a point $x \in \Omega \subset \mathbb{R}^3$ in the (macroscopic) flow domain $\Omega$ with the center of mass of the dumbbell, and have, as before, denoted by $q$ the vector pointing from $x_1$ to $x_2$. 
 By recalling the definition \cref{Def:Vectorb} of $b(\cdot,\cdot)$ and setting $\psi(x,q,t):=\tilde\psi(x-\tfrac12 q,x+\tfrac12 q,t)$, we find that
\begin{equation} \label{Eq:Langevin3} 
\begin{aligned}
 \pt \psi \, + \,  & \tau_0^{1 -\alpha} \div_x \bigg(\frac{u(x-\tfrac12q,t)+u(x+\tfrac12q,t)}{2} \ptb\psi \bigg)\\ &\quad+ \tau_0^{1 -\alpha} \div_q\bigg(\big(u(x+\tfrac12q,t)-u(x-\tfrac12q,t)\big) \ptb\psi - \frac{2F(q)}{\zeta}  \ptb\psi \bigg) \\ &= \frac{k_B\mu_T \tau_0^{1 -\alpha}}{2\zeta} \Delta_x \ptb\psi + \frac{2k_B\mu_T \tau_0^{1 -\alpha}}{\zeta} \Delta_q \ptb\psi. 
\end{aligned} 
\end{equation}
To proceed, we assume `local homogeneity', i.e., that the spatial variation of the velocity field over the microscopic length-scale of a single dumbbell is negligibly small. Consequently, the arithmetic mean $\big(u(x-\frac12q,t)+u(x+\frac12q,t)\big)/2$ can be approximated by $u(x,t)$ in the second term on the left-hand side of \cref{Eq:Langevin3}.  In the case of the third term, we use Taylor expansion of $u$ about the point $x$ to obtain 
\begin{equation} \begin{aligned} u(x+\tfrac12 q,t)-u(x-\tfrac12 q,t) &= \nablax u(x,t) q + \mathcal{O}(|q|^3) \\ &=\Big(\sigma\big(u(x,t)\big) + \omega\big(u(x,t)\big)  \Big)q + \mathcal{O}(|q|^3),
		\end{aligned}
	\label{Eq:ApproxTaylor}
	\end{equation}
where we have further split the gradient of $u$ into its symmetric and antisymmetic parts, which are, respectively, defined as follows:
 \begin{equation} \label{Eq:Omega} \sigma(u):=\frac{\nablax u + (\nablax u)^{\mathrm{T}}}{2}, \qquad \omega(u):=\frac{\nablax u - (\nablax u)^{\mathrm{T}}}{2}.
 	\end{equation}
In the approximation \cref{Eq:ApproxTaylor}, we omit the $\mathcal{O}(|q|^3)$ term and further, we also omit the symmetric part of the gradient of $u$, i.e., we consider the, so called, \textit{corotational model}. 
While the omission of the $\mathcal{O}(|q|^3)$ term from \cref{Eq:ApproxTaylor} can be justified on the grounds that $|q| \ll 1$, our omission of the term $\sigma(u(x,t))$ from the additive decomposition $\nabla_x u(x,t)=\sigma(u(x,t))+\omega(u(x,t))$ is for purely technical reasons.
%
In the time-fractional corotational model considered here, polymer molecules are therefore allowed to rotate, but they are forced to do so without stretching. This modelling assumption weakens the coupling between the Navier--Stokes equation and the (time-fractional) Fokker--Planck equation; for example, if the initial datum for $\psi$ happens to be spherically symmetric with respect to $q$ and independent of the spatial variable $x$, then the Fokker--Planck equation is decoupled from the Navier--Stokes equation. In this respect, the model that we study here is no
different from the corotational model considered (in the case of $\alpha=1$) in the works  \cite{lions2000global,lions2007global,schonbek2009existence,masmoudi2008well,barrett2005existence,barrett2009numerical,debiec2023corotational,lin2008global,busuioc2014fene}.
Thus, we consider the following corotational Fokker--Planck equation with a time-fractional derivative:
\begin{equation} \label{Eq:FokkerDim1}\begin{aligned} 
&\pt \psi + \tau_0^{1- \alpha}
\div_x(u \ptb\psi) + \tau_0^{1- \alpha} \div_q\big(\omega( u) q \ptb\psi \big) \\ 
&\quad = \frac{k_B \mu_T \tau_0^{1- \alpha}}{2\zeta} \Delta_x \ptb\psi + \frac{2k_B\mu_T \tau_0^{1- \alpha}}{\zeta} \Delta_q \ptb\psi+ \tau_0^{1- \alpha} \div_q\Big( \frac{2F(q)}{\zeta}  \ptb\psi\Big).
\end{aligned} 
\end{equation}
We shall rely on the fact that $q^{\mathrm{T}} \omega(u) q \equiv 0$ thanks to the skew-symmetry of $\omega(u)$. In the general noncorotational case $q^{\mathrm{T}} (\nabla_x u)q$ is of course not identically nonzero in the flow domain. For $\alpha=1$ at least, the proof of the existence of large-data global-in-time weak solutions to the general noncorotational Navier--Stokes--Fokker--Planck system relies, instead, on an entropy estimate (cf.  \cite{barrett2011existence}). It is this entropy estimate that needs to be replicated in the time-fractional case, for $\alpha \in (0,1)$, as a key
ingredient of the proof of the existence of global-in-time large-data weak solutions. The analysis of the  general noncorotational time-fractional model is deferred to future work.



  Let $\psi(x,q,t) $ denote from now on the probability density function for a collection of $N\gg 1$ dumbbells. As we are dealing with a dilute polymeric fluid, the polymer molecules suspended in the fluid are assumed not to interact with each other and they move without self-interaction. The function $\psi$ therefore satisfies the same partial differential equation, \eqref{Eq:FokkerDim1}, as in the case of a single dumbbell. The only difference is in the choice of the initial datum $\psi^0 \geq 0$ for the Fokker--Planck equation. 
A typical choice of $\psi^0$ in the present context is $$\psi^0(x,q) = \frac{1}{N}\sum_
{j=1}^N \alpha_j(q) \Psi_j(x),$$ where $\alpha_j(q) \geq 0$ for all $q \in D$ and $\int_D \alpha_j(q)\d q = 1$, $j=1,\ldots,N$, and $\Psi_j \geq 0$, for all $x \in \Omega$ and $\int_\Omega \Psi_j (x) \d x =1$, $j=1,\ldots,N$. For example, one may choose $\Psi_j$  as a mollifier (i.e. a nonnegative $C^\infty_0$ approximation to the Dirac measure) concentrated at a point $z_j \in \Omega$, $j=1,\ldots,N$, with $\{\Psi_j\}_{j=1}^N$ forming a scaled partition of  unity; here $z_j$ can be thought of as the initial location of the center of mass of the $j$-th dumbbell.

For the sake of simplicity we shall confine our attention to the case when the boundary $\partial\Omega$ of the macroscopic flow domain $\Omega \subset \mathbb{R}^3$ has no inflow or outflow parts. The polymeric fluid under consideration is therefore confined to $\Omega$, and its velocity will be supposed to satisfy the no-slip boundary condition $u(x,t)=0$ for all $(x,t) \in \partial\Omega \times (0,T)$. 
Thus the
total number $N\gg 1$ of polymer molecules contained in $\Omega$ remains constant in time. The number density 
$\rho_P := \rho_P(x,t)$, in $[m^{-3}]$, of the polymer molecules contained in $\Omega$ is called the \textit{polymer number density} and it is related to $N$  by $ N = \int_\Omega \rho_P(x,t) \d x$. The polymer number density is further related to the probability density function $\psi$ satisfying the Fokker--Planck equation  \eqref{Eq:FokkerDim1} by 
\begin{align}\label{eq:rho-psi} 
\rho_P(x,t) = N \int_D \psi(x,q,t) \d q,
\end{align}
with the understanding that $\psi$ has the SI unit of $[m^{-6}]$. We shall supplement the Fokker--Planck equation \eqref{Eq:FokkerDim1} with no-flux (homogeneous Neumann) boundary conditions on $\partial\Omega \times D \times (0,T)$ and on $\Omega \times \partial D \times (0,T)$, which will ensure that $\int_{\Omega}\int_D \psi(x,q,t) \d q \d x$ is constant in time, and is therefore equal to $$\int_{\Omega}\int_D \psi(x,q,0) \d q \d x =  \int_{\Omega}\int_D \psi^0(x,q) \d q \d x=1.$$

Having established the “microscopic” equation that describes the statistical properties of polymer molecules in the continuum, we turn our attention to the continuum mechanical “macroscopic” equations of motion of the incompressible fluid in which the polymer molecules are suspended. We note that polymeric fluids are non-Newtonian fluids and the presence of the $N \gg 1$ polymer molecules contributes an additional term, $\tau=\tau(\psi)$, to the stress tensor appearing in the balance of linear momentum equation in the Navier--Stokes system: a symmetric polymeric extra stress tensor, which we shall define below. We assume that the evolution of the  velocity field $u$ and the pressure $p$ is governed by the incompressible Navier--Stokes system
\begin{equation} \label{Def:NS} \begin{aligned}
		\rho(\pt u + (u \cdot \nablax) u )- \eta \Delta_x u + \nablax p &= \div_x \tau (\psi) &&\quad\text{in }\Omega\times (0,T), \\
		\div_x u &=0 &&\quad\text{in }\Omega\times (0,T),\end{aligned} \end{equation}
supplemented with the homogeneous Dirichlet boundary condition $u=0$ on $\partial \Omega \times (0,T)$ and the initial condition $u(0)=u^0$ in $\Omega$
at $t=0$. 
As before, $\eta$ denotes the dynamic viscosity and $\rho$ stands for the macroscopic density, which is assumed to be constant in space and time.   The polymeric extra-stress tensor $\tau=\tau(\psi)$ is defined by the so-called \textit{Kramers expression}, see, e.g., \cite{bird1987dynamics2},
\begin{equation} 
	\label{Def:taud} \tau(\psi):=  \rho_P  \big(\C(\psi)- k_B \mu_T I_3\big),
\end{equation}
where, as before, $\rho_P$ is the polymer number density, $k_B$ and $\mu_T$ are the Boltzmann constant and the absolute temperature, respectively, and $I_3$ is the $3\times 3$ identity matrix. 
Finally,  the $3\times 3$ symmetric  tensor $\C$ appearing in the expression for  $\tau$ is defined by 
\begin{equation} 
\label{Def:C} \C(\psi):= \frac{\int_D  F(q) q^{\mathrm{T}} \psi \d  q }{ \int_D   \psi \d  q }  
,
\end{equation}
see, e.g., \cite{suli2018mckeanvlasov}. Because the polymer number density $\rho_P$ is related to $\psi$ by \eqref{eq:rho-psi}, the polymeric extra stress tensor is simplified to 
\[ \tau(\psi)  =\tau^1(\psi)+ \tau^2(\psi),\]
where
\begin{align} \label{eq:tau1}
 \tau^1(\psi) &:=   N \int_D  F(q) q^{\mathrm{T}} \psi \d  q , \\ \label{eq:tau2}
 \tau^2(\psi) &:=       - N k_B \mu_T  \int_D \psi \d q \ I_3.
\end{align}
Recall that $\psi$ is assumed to have the SI unit $[m^{-6}]$.

We note that the definition of $\C(\psi)$  results in the SI unit $[ kg/(s^2 m^2)] $ for the polymeric extra stress tensor $\tau=\tau(\psi)$.  
 The tensor $\tau$ is responsible for coupling the velocity $u$ and the pressure $p$ to the probability density function  $\psi$. 
Dividing the Navier--Stokes momentum equation by the macroscopic density $\varrho$ and introducing the kinematic viscosity as $\nu := \eta/\rho$, we arrive at the Navier--Stokes system in its dimensional form:
\begin{equation} \label{Eq:NavierDim}
\begin{aligned}
	\pt u + (u \cdot \nablax) u - \nu \Delta_x u + \frac 1\rho\nablax p &= \frac 1\rho\div_x \tau(\psi) &&\quad\text{in }\Omega\times (0,T), \\
	\div_x u &=0 &&\quad\text{in }\Omega\times (0,T).
\end{aligned} 
\end{equation}
In the next section we shall perform a nondimensionalization of the Navier--Stokes--Fokker--Planck system. In order to distinguish a nondimensionalized
quantity from its dimensional form, we shall use the subscript $_{\mathrm{dim}}$ for dimensional quantities in cases where confusion might arise; so, for example, we shall write $\tau_{\text{dim}}$ to indicate the original dimensional form of the polymeric extra stress tensor, while $\tau$ will henceforth signify its nondimensionalized form. Similarly $\mathcal{C}_{\mathrm{dim}}$ will denote the original dimensional form of $\mathcal{C}$ (cf. \eqref{Def:C}), while $\mathcal{C}$ will signify its form following nondimensionalization.

\subsection{Nondimensionalization} \label{se:non}
Next, we transform the Navier--Stokes--Fokker--Planck system, see \eqref{Eq:NavierDim} and  \eqref{Eq:FokkerDim1}, into its nondimensionalized form. To this end, we define the quantities $\hat x$, $\hat t$ and $\hat u$ by setting
$$x=L_0 \hat x, \quad t=T_0 \hat t, \quad u(x,t)=U_0 \hat u(\hat x,\hat t), $$
where	$L_0$ and $T_0$ stand for  the characteristic macroscopic length-scale and time-scale, respectively, and $U_0$ denotes the characteristic velocity of the macroscopic flow. 
 In a similar manner, we introduce $\hat q$ by letting $q=l_0 \hat q $, where $l_0$ is a characteristic microscopic length-scale, recall that $\psi$ was assumed to have the SI unit $[m^{-6}]$, and we define the dimensionless quantity $\hat\psi$ by 
\begin{align} \label{eq:hatpsi}
 \hat \psi(\hat x, \hat q, \hat t) := (L_0 l_0)^{3}\, \psi(x,q,t) . 
\end{align}
 We point out that scaling $\psi$ by a constant does not affect the definition of the $\C_{\text{dim}}(\psi)$ given by \eqref{Def:C}; i.e., for any positive constant $a$ we have $\C_{\text{dim}}(a \psi)  =\C_{\text{dim}}(\psi)$. 
Scaling $\psi$ by a positive constant, however, affects its relationship to the polymer number density $\rho_P (x,t)$. To avoid the nondimensionalization of $\rho_P (x,t)$, we do not consider the  definition \eqref{Def:taud} of $ \tau_{\text{dim}}(\psi) $ but of its equivalent form given by \eqref{eq:tau1} and \eqref{eq:tau2}.

 Consequently, we obtain from the time-fractional Fokker--Planck equation \eqref{Eq:FokkerDim1} the following partial differential equation
\begin{align} \label{Eq:FokkerNondim1}
& \nonumber \frac{1 }{T_0}\partial_{\hat t} \hat \psi + \frac{\tau_0^{1- \alpha} U_0 }{T_0^{1-\alpha} L_0} \div_{\hat{x}}( \hat u (\psi) \partial_{\hat t}^{1-\alpha} \hat \psi) + \frac{ \tau_0^{1- \alpha} U_0}{T_0^{1-\alpha} L_0} \div_{\hat q} \big(\omega( \hat u (\psi) ) \hat{q}\, \partial_{\hat t}^{1-\alpha} \hat \psi\big)  \\ 
&\quad =    \frac{k_B \mu_T \tau_0^{1- \alpha}}{2\zeta T_0^{1-\alpha} L_0^{2}}  \Delta_{\hat x} \,\partial_{\hat t}^{1-\alpha} \hat \psi +  \frac{2k_B\mu_T \tau_0^{1- \alpha}}{\zeta l_0^2 T_0^{1-\alpha}} \Delta_{\hat q} \, \partial_{\hat t}^{1-\alpha} \hat \psi +  \frac{2 H \tau_0^{1- \alpha}}{\zeta T_0^{1-\alpha}} \div_{\hat q}( \hat F(\hat q)  \partial_{\hat t}^{1-\alpha} \hat \psi) 
\end{align}
defined on the nondimensionalized domain $\widehat \Omega \times \widehat D \times (0, \hat T)$. Here we have used  the notation $\hat u(\psi) $ to indicate that the velocity field is understood to depend on the original (dimensional) probability density function $\psi$, and will only be expressed as a function depending on the nondimensionalized probability density function $\hat \psi$ in the next step. 
We set the characteristic macroscopic and microscopic time-scales to, respectively, $T_0:=L_0/U_0$, $\tau_0 := T_0$  and the nondimensionalized force in case of the standard FENE type model to
\begin{align} \label{eq:hatF}
\hat F(\hat q) ;= \frac{\hat q}{1 - |\hat q|^2/|\hat q_{\max} |^2},  \qquad \hat q_{\max} := q_{\max}/l_0.
\end{align}
The  prefactors of the last two terms on the right-hand side of \eqref{Eq:FokkerNondim1} are equal if the microscopic length-scale $\ell_0$ is defined as follows: 
\begin{align} \label{Eq:Def:l_0}
\ell_0^2:= \frac{k_B \mu_T}{H}.\end{align}
Next we introduce the Deborah number $\lambda$, defined as the ratio of the time it takes for the material to adjust to applied stresses/deformations and the characteristic time scale, $T_0$, by
\[ \lambda := \frac{\zeta/(4H)}{T_0} = \frac{\zeta}{4H T_0}  = \frac{\zeta \ell_0^2}{4 k_B \mu_T T_0}.\]
Finally, we define the nondimensional parameter 
\[ \varepsilon := \frac{k_B \mu_T}{2\zeta  U_0 L_0},\] 
whose numerator and denominator both carry the SI unit of $\mathrm{J = [kg\,m^2/s^2]}$ corresponding to energy. Thus, we obtain from \eqref{Eq:FokkerNondim1} by multiplying it with $T_0$ the following nondimensional equation:
\begin{align} \label{Eq:DerivFP}
\nonumber 	\partial_{\hat t} \hat \psi + (\hat u (\psi) \cdot \nabla_{\hat x}) \partial_{\hat t}^{1 - \alpha} \hat \psi + \div_{\hat q}\,\big(\omega(\hat u (\psi)) {\hat q}\, \partial_{\hat t}^{1- \alpha} \hat \psi\big) \, \qquad \qquad \qquad \\  = \varepsilon \Delta_{\hat x}\, \partial_{\hat t}^{1- \alpha} \hat \psi + \frac{1}{2\lambda} \div_{\hat q}\,(\nabla_{\hat q} \,\partial_{\hat t}^{1 - \alpha} \hat \psi + \hat F(\hat q)\, \partial_{\hat t}^{1- \alpha} \hat \psi).
\end{align} 
 We note in passing that the ratio of the diffusion coefficients $\varepsilon$ and $1/(2\lambda)$ featuring in the equation \eqref{Eq:DerivFP} is equal to $(\ell_0/(2L_0))^2$, which is $\ll 1$, so the first-term on the right-hand side of \eqref{Eq:DerivFP}, called the center-of-mass diffusion term, is frequently neglected in practical considerations. Crucially, we shall retain this term in the equation and will continue to work with a strictly positive center-of-mass diffusion coefficient $\varepsilon$, as is implied by the derivation of the Fokker--Planck equation \eqref{Eq:DerivFP} performed above.

In the same manner, we multiply the Navier--Stokes system \eqref{Eq:NavierDim} by $T_0/U_0$ and arrive at the following nondimensionalized system posed on $\hat \Omega \times (0,\hat T)$:
\begin{equation} \label{Eq:NavierNonDim}
\begin{aligned}
	\partial_{\hat t} \hat u + (\hat u \cdot \nabla_{\hat x}) \hat u - \frac{1}{\text{Re}} \Delta_{\hat x} \hat u + \nabla_{\hat x} \hat p &= \div_{\hat x} \hat \tau(\hat \psi), \\  
	\div_{\hat x} \hat u &=0,
\end{aligned} 
\end{equation}
where $\text{Re}$ stands for the Reynolds number and $\hat p$ for a scaled pressure. Defining $\tau(\hat \psi )$ in a proper way allows us to replace $\hat u(\psi)$ in our nondimensionalized Fokker--Planck equation by $\hat u(\hat \psi)$ or, in compact notation, by $\hat u$.
Thanks to the reformulation of the stress, we reconsider the two terms $\hat \tau^1(\hat \psi)$ and $\hat \tau^2 (\hat \psi)$, appearing in $\hat \tau (\hat \psi) := \hat \tau^1(\hat \psi) + \hat \tau^2 (\hat \psi)$, separately. The definition \eqref{eq:tau1} of $\tau^1 (\psi)$ in combination with the nondimensionalization steps and the definition of $\hat F$ and $\hat \psi$, see \eqref{eq:hatF} and \eqref{eq:hatpsi}, respectively yields
\begin{align} \label{eq:tau1nd}
\frac{1}{\rho} \frac{T_0}{U_0} \frac{1}{L_0} H l_0^2 l_0^3  &\int_{\hat D} \hat F(\hat q) \hat q^{\mathrm T} \hat \psi \d \hat q\, (L_0 l_0)^{-3}  N  \\ & \nonumber 
= \ \frac{N}{\rho L_0^3}  \frac{k_B \mu_T}{U_0^2} \int_{\hat D} \hat F(\hat q) \hat q^{\mathrm T} \hat \psi(\hat q) \d \hat q  =  \gamma\, \hat \C(\hat \psi) =: \hat \tau^1(\hat \psi),
\end{align}
with the dimensionless quantities $\gamma$ and $\hat \C(\hat \psi)$ being defined by
\begin{equation*}
	\gamma := \frac{ k_B \mu_T N }{\rho U_0^2 L_0^3}  \quad\mbox{and}\quad  \hat \C(\hat \psi):= \int_{\hat D} \hat F(\hat q) \hat q^{\mathrm T} \hat \psi \d \hat q.
\end{equation*}
To define $\hat \tau^2 (\hat \psi) $, we start from the scaled macroscopic momentum balance equation and the definition \eqref{eq:tau2}:
\begin{align} \label{eq:tau2nd}
\frac{1}{\rho} \frac{T_0}{U_0} \frac{1}{L_0} (- k_b \mu_T) l_0^3 & \int_{\hat D} \hat \psi \d \hat q \, (L_0 l_0)^{-3}  N   \ I_3  \\ & \nonumber
= \ -\frac{N}{\rho L_0^3} \frac{k_B \mu_T}{U_0^2}  \int_{\hat D} \hat \psi \d \hat q  \ I_3
 =  \gamma    \int_{\hat D} \hat \psi \d \hat q  \ I_3  =: \hat \tau^2(\hat \psi).
\end{align}
For the sake of notational simplicity we drop from now on all $\, \hat \cdot \,$ symbols.

We close this section by recalling from \eqref{Eq:NavierNonDim} and \eqref{Eq:DerivFP} our nondimensionalized model problem:
\begin{equation} \label{Def:FP}  \begin{aligned}
	\pt \psi + (u \cdot \nablax) \ptb \psi + \div_q\big(\omega(u) &q \ptb \psi\big) &&  \\  - \tfrac{1}{2\lambda} \div_q(\nablaq \ptb  \psi +  \ptb  \psi)  &=\eps \ptb \Delta_x \psi  &&\quad\text{in } \Omega \times D \times (0,T),  \\
\pt u + (u \cdot \nablax) u - \frac{1}{\text{Re}} \Delta_x u + \nablax p &= \div_x \tau(\psi) &&\quad\text{in }\Omega\times (0,T), \\
\div_x u &=0 &&\quad\text{in }\Omega\times (0,T),
\end{aligned} \end{equation}
and we supplement this system of equations with the following boundary conditions:
$$
\begin{aligned}
\left(\tfrac{1}{2\lambda} (\nablaq \ptb \psi + U'q \ptb\psi)-\omega(u) q \ptb \psi \right) \cdot n_{\partial D} &=0 \qquad \text{on } \Omega \times \partial D \times (0,T), \\
\eps \nablax \ptb \psi \cdot n_{\partial \Omega} &=0 \qquad \text{on } \partial\Omega \times  D \times (0,T), \\
	u  &= 0 \qquad \text{on } \partial\Omega  \times (0,T).
\end{aligned}$$
The nondimensional initial data $u^0(x)$ and $\psi^0(x,q)$ for the velocity field and the probability density function, respectively, are obtained from their dimensional counterparts. We note that the scaling \eqref{eq:hatpsi} ensures
that $\int_\Omega \int_D \psi^0(x,q) \d x \d q = 1$; this then guarantees, thanks to the homogeneous Neumann boundary conditions on $\psi$, that $\int_\Omega \int_D \psi(x,q,t) \d x \d q = 1$ for all $t \geq 0$.

	\section{Mathematical preliminaries} \label{Sec:Prelim}
In this section, we introduce some useful definitions and results regarding the fractional derivative in the sense of Riemann--Liouville and recall the Aubin--Lions lemma, which is a key result featuring in proofs of existence of weak solutions to nonlinear PDEs based on compactness arguments. %

For a Hilbert space $H$ with inner product $(\cdot,\cdot)_H$ and norm $\|\cdot\|_H$, we shall denote the duality pairing between $H$ and its dual space $H'$ by $\langle \cdot,\cdot\rangle_H$. We shall denote the inner product on the Bochner space $L^2(0,T;H)$ by $(\cdot,\cdot)_{L^2H}$, and we shall write $(\cdot,\cdot)_{L^2_tH}$ when in this inner product the temporal interval of integration is $(0,t)$ for some $t \in (0,T)$ rather than the complete interval $(0,T)$, i.e., $$(u,v)_{L^2_tH}:=\int_0^t (u(s),v(s))_H \, \text{d}s \qquad \forall\, u,v \in L^2(0,T;H).$$
The norm induced by this inner product will be denoted by $\|\cdot\|_{L^2_t H}$.

\subsection{Riemann--Liouville kernels}
The Riemann--Liouville kernel function $g_\alpha$ of order $\alpha$ is defined by $g_\alpha(t):=t^{\alpha-1}/\Gamma(\alpha)$, $t \in (0,T)$, for $\alpha > 0$ and $g_0(t):=\delta_0(t)$ (the Dirac distribution concentrated at $0$) for $\alpha=0$. We observe that  $g_\alpha \in L^p(0,T)$ for any $\alpha\in (1-1/p,1)$ and $p \in [1,\infty)$, and the kernel function satisfies the following semigroup property; see \cite[Theorem 2.4]{diethelm2010analysis}:
\begin{equation} \label{Eq:Semigroup}
	\ga * g_\beta = g_{\alpha+\beta} \qquad \forall\, \alpha,\beta \geq 0.
\end{equation} 

We note that when $\alpha \in (0,1)$, one can bound the $L^p(0,t)$-norm of a function $u:(0,T) \to \R$ by its convolution with $\ga$ as follows: for any $t \in (0,T]$, we have that
\begin{equation}\begin{aligned} \|u\|_{L^p(0,t)}^p := \int_0^t |u(s)|^p \ds  &\leq t^{1-\alpha} \int_0^t (t-s)^{\alpha-1} |u(s)|^p \ds \\ &\leq T^{1-\alpha} \Gamma(\alpha) \big(\ga * |u|^p\big)(t).	
\end{aligned} 
\label{Eq:KernelNorm}
\end{equation}
This implies that the space $$L^p_\alpha(0,T):=\big\{u:(0,T) \to \R:\sup_{t \in (0,T)} (\ga*|u|^p)(t) < \infty \big\},$$ is indeed a subspace of $L^p(0,T)$.
If the order $\alpha$ of the kernel function $g_\alpha$  is larger than 1, then one can exploit the semigroup property of the kernel and apply Young's convolution inequality (cf. Lemma 3.2 in \cite{Oparnica}) as follows:
$$(g_{1+\alpha}*u)(t)=(g_1*\ga*u)(t)=\int_0^t (\ga*u)(s) \ds \leq \|\ga\|_{L^1(0,t)} \|u\|_{L^1(0,t)},$$
for any $u \in L^1(0,T)$ and any $t \in (0,T]$.

\subsection{Time-fractional derivative} 
We can rewrite the definition of the Riemann--Liouville derivative stated in \cref{Eq:RL} in a compact form by using the convolution operator $*$ as $\pta w=\pt (\gb * w)$.
We refer to the classical textbooks \cite{diethelm2010analysis,baleanu2012fractional} and the newer monographs \cite{jin2021fractional,chen2022fractional} regarding fractional calculus and fractional differential equations.

 We define the fractional Riemann--Liouville--Bochner space   for $\alpha \in (0,1)$ and $p \in [1,\infty)$ on $(0,T)$ with values in $H$ by $$\W^{\alpha,p}(0,T;H):=\big\{u \in L^p(0,T;H) : \gb * u \in W^{1,p}(0,T;H)\big\}.$$
Here, the convolution $\ast$ is of course understood to be with respect to the temporal variable $t \in (0,T)$. In the limit, 
when $\alpha=1$, we have that $g_{1-\alpha} = g_0=\delta$, and then $$\W^{1,p}(0,T;H):=W^{1,p}(0,T;H):=\big\{u \in L^p(0,T;H) : \pt u \in L^p(0,T;H)\big\}.$$
However for $0 < \alpha < 1$,
the Riemann--Liouville space $\W^{\alpha,p}(0,T;H)$ differs from the fractional-order Sobolev--Bochner space
$$W^{\alpha,p}(0,T;H):=\Big\{u \in L^p(0,T;H) : (s,t) \mapsto \tfrac{\|u(t)-u(s)\|_H}{|t-s|^{\alpha+1/p}} \in L^{p}((0,T)\times(0,T))\Big\},$$
which can be confirmed by noting that the function $g_\alpha$ is an element of $\W^{\alpha,p}(0,T):=\W^{\alpha,p}(0,T;\R)$ for $\alpha \in (1-\tfrac{1}{p},1)$ but not of $W^{\alpha,p}(0,T)$; see \cite[Proposition 3.13]{carbotti2021note}. 

\begin{remark}  
Even though the space $\W^{\alpha,p}(0,T)$ is not a subspace of the Sobolev--Slobodecki\u{\i} space $W^{\alpha,p}(0,T)$, it is nevertheless continuously embedded into $C([0,T])$, the space of uniformly continuous functions defined on $[0,T]$, for $\alpha \in (1- \frac{1}{p},1]$ and $p \in [1,\infty)$; see, \cite[Remark 6.2]{carbotti2021note}. 
\end{remark}

We also introduce the following Riemann--Liouville space incorporating a homogeneous initial condition at $t=0$, albeit in a somewhat nonstandard manner:
$$\begin{aligned}
\W^{\alpha,p}_{0}(0,T;H)&:=\big\{u \in \W^{\alpha,p}(0,T;H) : (\gb*u)(0)=0 \big\}.
\end{aligned}$$ 
We note that the function $\gb*u:[0,T] \to H$ has a well-defined trace at $t=0$ (even when the function $u$ itself might not have one) thanks to the continuous embedding
$$\gb*u \in W^{1,p}(0,T;H) \hookrightarrow AC([0,T];H).$$
For a given element $z \in H$, the convolution $\gb*z$ should be understood to mean the function $t \mapsto (\gb*g_1)(t) z \in H$; recall that $g_1(t)\equiv 1$ for all $t\geq 0$. Thus, $z \in H$ is in this context now thought of as the mapping $t \mapsto g_1(t)z \in \mathcal{W}^{\alpha,p}(0,T;H)$,  for $\alpha \in (0,1)$, $p \in [1,\infty)$ and $0<\alpha p < 1$, or if $\alpha =1$ and $p \in [1,\infty)$. 
Thanks to the semigroup property \eqref{Eq:Semigroup} we then have that $$t\mapsto (\gb*z)(t)=z\,g_{2-\alpha}(t)=\frac{z}{\Gamma(2-\alpha)}  t^{1-\alpha} \in C([0,T];H)$$ for any $\alpha \in [0,1]$. Thus, for $\alpha \in (0,1]$, $p \in [1,\infty)$ and $z \in H$ we define the following `translated' Riemann--Liouville space:
\begin{equation} \label{Eq:RLSpaceU0}
    \W^{\alpha,p}_{z}(0,T;H):=\big\{u \in L^p(0,T;H) : u-z \in \W^{\alpha,p}(0,T;H), ~ (g_{1-\alpha}*u)(0)=0 \big\}.
\end{equation}
Note that if $\alpha \in (0,1)$, $p \in [1,\infty)$ and $0<\alpha p<1$, or if $\alpha=1$ and $p \in [1,\infty)$, then $u-z \in \W^{\alpha,p}(0,T;H)$ if, and only if $u \in \W^{\alpha,p}_0(0,T;H)$, and therefore, for such $\alpha$ and $p$ we have that $\W^{\alpha,p}_{z}(0,T;H) = \W^{\alpha,p}_0(0,T;H)$ irrespective of the choice of $z \in H$.

Next, we state an inverse convolution (or deconvolution) property. Its name stems from the fact that convolution with the kernel $\ga$ acts as an inverse mapping on the operator of taking $\alpha$-th fractional derivative, up to a term that involves the initial value at $t=0$.
\begin{lemma}[Inverse convolution] Let $\alpha \in (0,1]$ and $p\in [1,\infty)$. Suppose further that $H$ is a Hilbert space and $z \in H$. Then, for any $t \in (0,T)$, we have the following equalities:
\begin{align} 	\label{Eq:InverseConvolution1}
(\ga * \pta u)(t) &= u(t) - (\gb*u)(0)\ga(t) \quad &&\forall\, u \in \W^{\alpha,p}(0,T;H), \\ 	\label{Eq:InverseConvolution}
		(\ga* \pta u)(t)    &=u(t)  &&\forall\, u \in \W_{z}^{\alpha,p}(0,T;H). \end{align} 
\end{lemma}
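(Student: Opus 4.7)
The plan is to focus on the first identity \eqref{Eq:InverseConvolution1}; the second identity \eqref{Eq:InverseConvolution} then follows either by direct specialization (since $(\gb * u)(0) = 0$ for $u \in \W^{\alpha,p}_z(0,T;H)$ by the definition \eqref{Eq:RLSpaceU0}), or, when $z \neq 0$, by first applying \eqref{Eq:InverseConvolution1} to $u - z \in \W^{\alpha,p}(0,T;H)$ and then using $\pta z = z\,\gb$ together with the semigroup property $\ga * \gb = g_1$ to cancel the $z$-contribution. For \eqref{Eq:InverseConvolution1}, set $v := \gb * u$; the hypothesis $u \in \W^{\alpha,p}(0,T;H)$ gives $v \in W^{1,p}(0,T;H) \hookrightarrow AC([0,T];H)$, so $v$ has a well-defined trace $v(0) = (\gb * u)(0) \in H$ and $\pt v = \pta u \in L^p(0,T;H)$. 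The target is then
\begin{equation*}
(\ga * \pt v)(t) = u(t) - v(0)\,\ga(t) \qquad \text{for a.e. } t \in (0,T).
\end{equation*}

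The main obstacle is that a direct integration by parts in $\int_0^t \ga(t-s)\,\partial_s v(s)\,ds$ is not available: the kernel $\ga(\sigma) = \sigma^{\alpha-1}/\Gamma(\alpha)$ is singular at $\sigma = 0$ for $\alpha \in (0,1)$, so the boundary term $\ga(0)\,v(t)$ is ill-defined. To sidestep this, the plan is to first convolve the desired identity with $g_1$, i.e., integrate once in time, and exploit the semigroup property \eqref{Eq:Semigroup}. What must then be proved is
\begin{equation*}
(g_{1+\alpha} * \pt v)(t) + v(0)\,g_{1+\alpha}(t) = (g_1 * u)(t).
\end{equation*}
The new kernel $g_{1+\alpha}(\sigma) = \sigma^\alpha/\Gamma(1+\alpha)$ is continuous on $[0,T]$ with $g_{1+\alpha}(0) = 0$ and classical derivative $\ga$, so an ordinary integration by parts (legitimate because $v$ is absolutely continuous and both $g_{1+\alpha}\,\pt v$ and $\ga\,v$ lie in $L^1$ by H\"older's inequality) yields
\begin{equation*}
(g_{1+\alpha} * \pt v)(t) = -g_{1+\alpha}(t)\,v(0) + (\ga * v)(t),
\end{equation*}
and $\ga * v = \ga * \gb * u = g_1 * u$ by \eqref{Eq:Semigroup} once more, which matches the integrated identity exactly.

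To return from the integrated form to the pointwise a.e.\ form, I would invoke the elementary fact that any $f \in L^1(0,T;H)$ with $(g_1 * f)(t) = \int_0^t f(s)\,\mathrm{d}s \equiv 0$ on $(0,T)$ must vanish almost everywhere, applied to $f := \ga * \pta u + v(0)\,\ga - u$. This proves \eqref{Eq:InverseConvolution1}. The degenerate endpoint $\alpha = 1$, where $\gb = g_0 = \delta_0$ and $v = u$, is handled separately as the fundamental theorem of calculus on $W^{1,p}(0,T;H)$ and is conceptually a limiting case of the same computation. The only genuinely nontrivial step throughout is the handling of the singular kernel $\ga$ at the origin, which is neutralized by the pre-convolution with $g_1$.
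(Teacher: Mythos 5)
Your proof is correct and takes essentially the same route as the paper: both avoid the singularity of $\ga$ at the origin by passing through a convolution with $g_1$ (via the semigroup property), both arrive at the same intermediate integrated identity $g_{1+\alpha}*\pta u + (\gb*u)(0)\,g_{1+\alpha} = g_1*u$, and both then recover the pointwise statement from it. The only cosmetic difference is that the paper reaches that integrated identity by applying the fundamental theorem of calculus to $\gb*u$ and then convolving with $\ga$, whereas you obtain it by an explicit integration by parts on $g_{1+\alpha}*\pt v$; likewise the paper undoes the $g_1$-convolution by differentiating in $t$, while you phrase the same step as an a.e.\ uniqueness argument for primitives — the two computations are rearrangements of one another.
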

\begin{proof}
	We start with the proof of the equality \cref{Eq:InverseConvolution1}.
Recall that for any function $u \in \W^{\alpha,p}(0,T;H)$ we have $\gb*u \in  AC([0,T];H)$, and the fundamental theorem of calculus for absolutely continuous functions therefore yields, for any $t\in [0,T]$,
$$(\gb *u)(t) - (\gb*u)(0) = \int_0^t \partial_s (\gb * u)(s) \ds=(g_1 * \pta u)(t).$$
We convolve this equality with the kernel $\ga$ and make use of the semigroup property \cref{Eq:Semigroup} to obtain
$$(g_1*u)(t) - (\gb*u)(0) g_{1+\alpha}(t)=g_{1+\alpha}*\pta u,$$
where we have used that  $g_\alpha*1=g_\alpha*g_1=g_{1+\alpha}$, because $\alpha \Gamma(\alpha) = \Gamma(1+\alpha)$.
Next, we differentiate this equality in $t$ and observe that $\pt (g_1*u)=u$, $\pt g_{1+\alpha}=g_\alpha$, and $\pt (g_{1+\alpha}*v)=g_\alpha*v$, which yields \cref{Eq:InverseConvolution1}.
We finally note that  \cref{Eq:InverseConvolution} follows trivially from \cref{Eq:InverseConvolution1} and \cref{Eq:RLSpaceU0}.
 \end{proof}
 
The following result is a direct consequence of the interaction between fractional derivatives and kernel functions.
\begin{corollary} The following identities hold:
\begin{equation} \label{Eq:DerivativeofKernel}  \begin{aligned}
	\pta (\ga * u ) &=\pt ( \gb * \ga * u) = \pt (1*u) = u &&\forall\, u \in L^1(0,T;H), \\
	\ptb \pta u &=  \pt (\ga * \pta u) = \pt u &&\forall\, u \in  W_{0}^{1,1}(0,T;H).
\end{aligned}
\end{equation}
\end{corollary}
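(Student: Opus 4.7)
The plan is to derive both identities as direct consequences of the definition \eqref{Eq:RL} of $\pta$, the semigroup property \eqref{Eq:Semigroup}, and the inverse convolution property just established in \eqref{Eq:InverseConvolution}; no new estimates are required, only a careful accounting of convolutions.

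For the first chain of equalities, I would start from the definition $\pta w = \pt(\gb * w)$ and substitute $w = \ga * u$, giving $\pta(\ga * u) = \pt(\gb * \ga * u)$. Associativity of convolution together with \eqref{Eq:Semigroup} yields $\gb * \ga = g_1 \equiv 1$ on $(0,T)$, so the middle expression collapses to $\pt(1 * u)(t) = \pt \int_0^t u(s)\ds$. For $u \in L^1(0,T;H)$ the Lebesgue differentiation theorem returns the value $u(t)$ for a.e.\ $t \in (0,T)$, closing this line.

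For the second chain, I would use \eqref{Eq:RL} (with $\alpha$ replaced by $1-\alpha$) to rewrite $\ptb = \pt(\ga * \cdot)$, so that $\ptb\pta u = \pt(\ga * \pta u)$. The key step is to apply \eqref{Eq:InverseConvolution} with order $\alpha$ to obtain $\ga * \pta u = u$; differentiating then gives $\pt u$. Applying \eqref{Eq:InverseConvolution} requires verifying that $u \in W^{1,1}_0(0,T;H)$ belongs to $\W^{\alpha,1}_0(0,T;H)$. I would check this by differentiating under the convolution: since $u(0)=0$, one has $\pt(\gb * u) = \gb * \pt u \in L^1(0,T;H)$, so $\gb * u \in W^{1,1}(0,T;H)$, and the estimate $|(\gb*u)(t)| \leq \|u\|_{C([0,t];H)}\, g_{2-\alpha}(t)$, together with the continuous embedding $W^{1,1}(0,T;H) \hookrightarrow C([0,T];H)$ and $u(0)=0$, yields $(\gb*u)(0)=0$.

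The only subtlety — routine given the preliminaries of \Cref{Sec:Prelim} — is the verification of these regularity and initial-trace hypotheses so that the inverse convolution formula is applicable; once checked, both identities reduce to a single line of algebra, so I anticipate no substantive obstacle.
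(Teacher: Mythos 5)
Your proposal is correct and uses precisely the ingredients the paper points to when it calls the corollary ``a direct consequence'' (the paper gives no written proof): the definition \eqref{Eq:RL}, the semigroup identity $\gb*\ga = g_1 \equiv 1$, Lebesgue differentiation for the first line, and the inverse-convolution formula \eqref{Eq:InverseConvolution} for the second. Your verification that $W^{1,1}_0(0,T;H)\subset \W^{\alpha,1}_0(0,T;H)$ — via $\pt(\gb*u)=\gb*\pt u$ when $u(0)=0$ and the trace bound $\|(\gb*u)(t)\|_H \le \|u\|_{C([0,t];H)}\,g_{2-\alpha}(t)$ — is exactly the needed bridge, so there is no gap.
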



We shall require the following special case of the classical Aubin--Lions lemma; see \cite{simon1986compact}. Suppose that the Hilbert spaces $V,H,Z$ form a Gelfand triple $V \com H \con Z$. then, the following classical compact embeddings hold: 
\begin{equation} \begin{aligned} \label{Eq:aubin} 
W^{1,1}(0,T;Z) \cap L^p(0,T;V) &\com L^{p}(0,T;H), &&p \in [1,\infty), \\
W^{1,r}(0,T;Z) \cap L^\infty(0,T;V) &\com C([0,T];H), && r \in (1,\infty);
\end{aligned}\end{equation} 
see \cite{simon1986compact}. Several fractional counterparts of the Aubin--Lions lemma have been proposed; see \cite{ouedjedi2019galerkin,wittbold2020bounded,li2018some}. We make use of  the following result; see \cite[Corollary 3.2]{ouedjedi2019galerkin}:
\begin{equation*} \begin{aligned} %
\W^{\alpha,1}(0,T;Z) \cap L^p(0,T;V) &\com L^r(0,T;H), &&p \in (1,\infty), \quad r \in [1,p), \quad \alpha \in (0,1).
\end{aligned}\end{equation*} 
The proof can be easily adapted to the limit case $r=p$ if the $\alpha$-th fractional derivative is in a better space than $L^1(0,T;Z)$. This is done for Caputo derivatives in \cite{li2018some}. In fact, we obtain
\begin{equation} \begin{aligned} \label{Eq:aubinfractional2} %
		\W^{\alpha,r}(0,T;Z) \cap L^p(0,T;V) &\com L^p(0,T;H), &&r\in (1,\infty), \quad \alpha \in (0,1).
\end{aligned}\end{equation} 

The classical chain rule does not hold for fractional derivatives, but one can use the following inequality as a remedy; see \cite[Theorem 2.1]{vergara2008lyapunov}:
\begin{equation} \label{Eq:ChainOriginal}  \frac12 \pta \|u\|^2_H +\frac12 \gb(t) \|u\|_H^2 \leq (u,\pta u)_H \quad \forall\, u \in \W_{z}^{\alpha,2}(0,T;H),
\end{equation}
for $z \in H$ and almost all $t \in (0,T)$.

\section{Model revisited} \label{Sec:Form}

Having summarised the required results from fractional calculus, we revisit the mathematical model that we have derived in \Cref{Sec:Derivation}.
Let us assume for the moment that the solution $\psi$ to the Fokker--Planck equation belongs to $\mathcal{W}^{1-\alpha,p}(0,T;H) \cap C([0,T];H)$ for some $\alpha \in (0,1)$ and a suitable Hilbert space $H$, to be chosen. As $\psi \in C([0,T];H)$, it follows that $\|(g_\alpha \ast \psi)(t)\|_H \leq \frac{t^\alpha}{\Gamma(1+\alpha)}\|\psi\|_{C([0,T];H)}$, and therefore  $(g_\alpha \ast \psi)(0)=0$. Hence, $\psi \in \mathcal{W}^{1-\alpha,p}_0(0,T;H)$. It then follows from \eqref{Eq:InverseConvolution1}, with $\alpha$ replaced by $1-\alpha$ and $u=\psi$ there, that $(g_{1-\alpha}\ast \partial_t^{1-\alpha} \psi)(t) = \psi(t)$ for $t \in (0,T)$. 
Motivated by these properties, we introduce the auxiliary function $\phi$ by
\begin{equation} \label{Eq:Substitute} \phi := \ptb \psi = \pt (\ga * \psi),
\end{equation} 
whereby $\psi=g_{1-\alpha}*\phi$. We then have that $\pt \psi = \pt(g_{1-\alpha} \ast \phi) = \pta \phi$; and, thanks to the assumed continuity of $\psi$ (i.e. $\psi \in C([0,T];H)$) it
makes sense to require attainment of the initial condition $\psi(0) = \psi^0$, i.e. $(g_{1-\alpha} \ast \phi)(0) = \psi^0$. We shall therefore introduce the substitution $\phi:=\partial_t^{1-\alpha} \psi$ in \eqref{Def:FP}, which results in the following system of equations:
\begin{equation} \begin{aligned}
	\pt u + (u \cdot \nablax) u - \nu \Delta_x u + \nablax p - \div_x \tau( \gb * \phi) &=0, \\
	\div_x u &=0, \\
	\pta \phi  + (u \cdot \nablax) \phi + \div_q (\omega(u) q \phi)-\tfrac{1}{2\lambda} \div_q(\nablaq \phi + U'q  \phi)   -\eps  \Delta_x  \phi&=0, \end{aligned} 
\label{Eq:System}
\end{equation}
subject to the initial conditions $u(0)=u^0$ and $(\gb*\phi)(0)=\psi^0$ for a given nonnegative $\psi^0$ that fulfils $\int_D \psi^0 \dq=1$. Furthermore, we equip the system with the following boundary conditions:
\begin{align}\label{eq:neumannbc}
\begin{aligned}
u &=0 \qquad\text{on } \partial \Omega \times (0,T), \\
\left(\tfrac{1}{2\lambda} (\nablaq \phi + U'q \phi)-\omega(u) q \phi \right) \cdot n_{\partial D} &=0 \qquad\text{on } \Omega \times \partial D \times (0,T), \\
\eps \nablax \phi \cdot n_{\partial \Omega} &=0 \qquad\text{on } \partial\Omega \times  D \times (0,T).
\end{aligned}
\end{align}

\subsection{The Maxwellian and Maxwellian-weighted function spaces} \label{Sec:Maxwell}
We introduce the normalized Maxwellian by 
\begin{equation}
	\label{Def:Max} M(q)=\frac{e^{-U(\tfrac12 |q|^2)}}{\int_D e^{-U(\tfrac12 |s|^2)} \dd s}.
\end{equation}
Moreover, we define the (Maxwellian-weighted) Hilbert spaces
$$\begin{alignedat}{5}
	&\mathcal{H}=\{h \in L^2(\Omega;\R^d): \div \, h =0\}, \qquad ~\mathcal{H}_0&&=\{h \in \mathcal{H} : h \cdot n_{\partial \Omega} = 0 \text{ on } \partial \Omega\}, 
	\\ &\mathcal{V}=\{v \in H^1(\Omega;\R^d) : \div \, v = 0\}, \qquad ~ \mathcal{V}_0&&=\{v \in \mathcal{V} : v|_{\partial \Omega} = 0 \text{ on } \partial \Omega\},
	\\  &\mathcal{Y}=L^2(\Omega \times D),  \qquad\quad\,\,\,   \widehat{\mathcal{Y}}= L^2_M(\Omega \times D) &&=\{y \in \mathcal{Y}: \|y\|_{\widehat{\mathcal{Y}}} := \|M^{1/2}y \|_\mathcal{Y}<\infty \}, 
	\\ &\mathcal{X} = H^1(\Omega \times D), \qquad\quad \widehat{\mathcal{X}}= H_M^1(\Omega \times D) &&= \{\phi \in \mathcal{X}: \|\phi\|_\hX <\infty  \},  \\
&\mathcal{Z}=H^1(D;H^1(\Omega)), ~ \widehat{\mathcal{Z}} =H_M^1(D; H^1(\Omega))&&=\{\zeta \in \mathcal{Z}: \|\zeta\|_\hZ <\infty  \}, 
\end{alignedat}$$
where the norms on $\hX$ and $\hZ$ are defined by $\|\phi\|_\hX^2:= \|\phi\|_\hY^2 + \|\nablaq \phi\|_\hY^2 + \|\nablax \phi\|_\hY^2$ and $\|\zeta\|_\hZ^2 :=\|\zeta\|_\hX^2 + \|\nablax \nablaq \zeta\|_\hY^2$. Obviously, $H_M^2(\Omega \times D) \subseteq \hZ$, where $H^2_M(\Omega \times D)$ is the subspace of $H^1_M(\Omega \times D)$ consisting of all functions defined on $\Omega \times D$ whose second (weak) partial derivatives belong to $\hY=L^2_M(\Omega \times D)$.
We refer to \cite{barrett2005existence} regarding theoretical results on these weighted Hilbert spaces. In particular, we have the Gelfand triples
$$\begin{aligned} &\HSV \com \HS \hookrightarrow \HSV', \quad &&\HSV_0 \com \HS_0 \hookrightarrow \HSV_0', \\
	&\mathcal{X} \com \mathcal{Y} \hookrightarrow \mathcal{X}', \quad &&\hX \com \hY \hookrightarrow \hX',
\end{aligned}$$
where $\mathcal{V}'$, $\mathcal{V}'_0$, $\mathcal{X}'$ and $\hX'$ denote the dual space of, respectively, 
$\mathcal{V}$, $\mathcal{V}_0$, $\mathcal{X}$ and $\hX$. 

Using the definition of the normalized Maxwellian $M$, see \cref{Def:Max}, we have that $$M(q)\nablaq M(q)^{-1}=-M(q)^{-1} \nablaq M(q) =\nablaq U(\tfrac12 |q|^2) = U'(\tfrac12 |q|^2) q.$$ We introduce the scaled variable $\hphi=\phi/M$ and  with the formula 
$$M \nablaq \hphi = \nablaq \phi + M \nablaq M^{-1} \phi  = \nablaq \phi + U'q \phi$$
we can rewrite the fractional Fokker--Planck equation in \cref{Eq:System} as
$$ \pta \phi + (u \cdot \nablax) \phi + \div_q \big(\omega(u) q \phi\big) = \tfrac{1}{2\lambda} \div_q(M \nablaq \phim) + \eps \Delta_x \phi.$$
As was indicated earlier, we shall confine ourselves here to considering the corotational model, i.e.,
$\omega(v)=-\omega(v)^{\mathrm{T}}$, $q^{\mathrm{T}} \omega(v) q = 0$; if $\div\, v = 0$ it then follows that 
\begin{equation}\label{Eq:SigmaZero} 
\big(M \hphi \,\omega(v)q,\nablaq \hphi\big)_{\mathcal{Y}} = \frac12 \big(M \omega(v)q, \nablaq \hphi^2\big)_{\mathcal{Y}}=-\frac12 \big(\div_q(M \omega(v) q),\hphi^2\big)_{\mathcal{Y}} = 0; \end{equation}
see \cite{barrett2009numerical,barrett2005existence}.
We note in passing that partial integration yields the following equalities: 
\begin{equation} \label{Eq:IntParts} \begin{aligned}
		-2\big(M \omega(u) q \hat\varphi,\nablaq \hphi\big)_{\mathcal{Y}} &=  \big(\nablax(M\hat\varphi \nablaq \hphi)q,u\big)_{\mathcal{Y}} + \big(u\cdot q, \div_x (M\hat\varphi\nablaq \hphi)\big)_{\mathcal{Y}}
		\\ &= \big(M \nablax \hat\varphi (\nablaq \hphi)^{\mathrm{T}} q,u\big)_{\mathcal{Y}} + \big(M \hat\varphi \nablax \nablaq \hphi\, q,u\big)_{\mathcal{Y}} \\ &\quad + \big(u \cdot q,M \nablax \hat\varphi \cdot \nablaq \hphi\big)_{\mathcal{Y}} + \big(u \cdot q,M \hat\varphi \,\div_x \nablaq \hphi\big)_{\mathcal{Y}}.
\end{aligned} \end{equation} 

We recall that the stress tensor $\tau(\psi) = \tau_1(\psi) + \tau_2(\psi)$ is of the form  given by \eqref{eq:tau1nd} and \eqref{eq:tau2nd}; i.e., 
%
\begin{align}\label{Eq:tau1tau2}\tau^1(\psi)=\gamma\, \C(\psi),\quad \tau^2(\psi)= \gamma \int_{D} \psi \d q  \ I_3,\quad  \C(\psi):= \int_{D} F(q) q^{\mathrm T} \psi \d q,
\end{align}
where $\gamma>0$ is a dimensionless constant.

For $\C(M \hpsi)$, we are in a setting that allows us to deduce the following bound; see also \cite[Eq. (3.7)]{barrett2009numerical}:
\begin{equation} \label{Eq:C}  \begin{aligned}
		\int_\Omega |\C(M \hpsi)|^2 \d x & =
		\int_\Omega \left| \, \int_{D} F(q)  q^{\mathrm T} M \hpsi \d q \, \right|^2 \d x
		\\ &\leq \int_D M  | F(q)  q^{\mathrm T} |^2   \dq \, \int_{\Omega \times D} M|\hpsi|^2 \d(x,q)  \\ & \leq C  \| \hpsi\|_{\hY}^2 \quad \forall\, \hpsi \in \hY. \end{aligned}\end{equation}

	\section{Existence of weak solutions} \label{Sec:Analysis}
In this section, we prove the existence of a weak solution to the time-fractional Navier--Stokes--Fokker--Planck system. We use a Galerkin procedure and discretize the partial differential equations in space and derive suitable energy estimates. We will emphasize the places where the time-fractional derivative comes into play. We shall then pass to the limit in the sequence of Galerkin approximations to deduce the existence of a weak solution. We shall proceed step by step and prove this result through several lemmas. While, for physical reasons, the derivation of the system was in the previous sections discussed in the case of $d=3$ space dimensions, the analysis below applies to both two and three space dimensions. We begin by introducing the concept of a weak solution to the time-fractional corotational Navier--Stokes--Fokker--Planck system under consideration. 

\begin{definition} \label{Eq:DefWeak} Suppose that $d \in \{2,3\}$.
We call the pair $(u,\hphi)$ a weak solution to the system \cref{Eq:System}, \cref{eq:neumannbc} provided that   
\begin{align*}
u &\in L^\infty(0,T;\mathcal{H}_0) \cap L^2(0,T;\mathcal{V}_0) \cap W^{1,\tfrac{8}{4+d}}(0,T;\mathcal{V}_0'), \\ 
\hphi &\in L^2(0,T;\hX), \quad \pta \hphi \in  L^{\tfrac{8}{4+d}}\big(0,T; \hZ' \big),
\end{align*}
satisfies the initial conditions $u(0)=u^0$, $(\gb*\hphi)(0)=\hpsi^0 := \psi^0/M$ and  the variational problems
\begin{align}
		\langle \pt u,v \rangle_{L^{8/(4-d)}\mathcal{V}}  + \big((u \cdot \nablax)u,v\big)_{L^2\mathcal{H}} + \nu (\nablax u,\nablax v)_{L^2\mathcal{H}} && \label{Eq:NS} \\[-.1cm] + k_B \mu_T \big(\C(M \gb *\hphi),\nablax v\big)_{L^2\mathcal{H}} =0, && \notag  \\[.1cm]
		\langle  \pta \hphi,\hat\zeta\rangle_{L^{8/(4-d)} \hZ}  - (u \hphi,\nablax \hat\zeta)_{L^2\hY}+ \frac{1}{2\lambda} (\nablaq \hphi,\nablaq \hat\zeta)_{L^2\hY}  + \eps (\nablax \hphi,\nablax \hat\zeta)_{L^2\hY} && \label{Eq:FP} \\[-.2cm]   + \frac12 (\nablax(\hphi\nablaq \hzeta)q,u)_{L^2\hY} - \frac12 \big(u\cdot q,\div_x (\hphi\nablaq \hzeta)\big)_{L^2\hY} = 0. &&  \notag
\end{align}
for all $v \in L^{8/(4-d)}(0,T;\mathcal{V}_0)$ and $\hat\zeta \in L^{8/(4-d)}(0,T;\hZ)$. In these variational problems and hereafter, for a Hilbert space $H$ and $p \in [1,\infty]$, subscripts of the form $L^pH$ and $L^p_tH$ appearing in the various inner products, norms, and duality pairings, signify $L^p(0,T;H)$ and $L^p(0,t;H)$, respectively. 
\end{definition} 

We summarize the assumptions that we require for proving the existence of a weak solution in the sense of \cref{Eq:DefWeak} in Assumption \ref{Ass:WellPosedness} below. 

\begin{assumption} \label{Ass:WellPosedness}
	Let the following assumptions hold:
	\begin{itemize}
		\item $D \subset \R^d$,  $d\in\{2,3\}$, is a bounded open ball centered at the origin, $\Omega \subseteq \R^d$ is a Lipschitz domain (i.e., bounded, open, connected set in $\R^d$, with a Lipschitz-continuous boundary $\partial\Omega$), and $T<\infty$ is a fixed final time;
  \item $u^0 \in \mathcal{H}_0$, $\hpsi^0 \in \hX$ with $\hpsi^0 \in H^1_M\big(D; H^{1+d/2+\delta}(\Omega)\big)$ for $\delta>0$ arbitrarily small;
  \item $\tau(\psi) =\tau_1(\psi) + \tau_2(\psi)$ is given by \eqref{Eq:tau1tau2}, with the identity matrix $I_3 \in \mathbb{R}^{3 \times 3}$ replaced by the identity matrix $I_d \in \mathbb{R}^{d \times d}$ in the definition of $\tau_2(\psi)$,  and $\C$ satisfies \cref{Eq:C};
    \item $\alpha \in (1/2,1)$;
  \item $k_B,\mu_T, \nu,\lambda,\eps >0$.
	\end{itemize}
\end{assumption} 

The main result of this paper is the following theorem asserting the existence of global-in-time large-data weak solutions to the time-fractional Navier--Stokes--Fokker--Planck system under consideration. 


\begin{theorem} \label{Thm:WellPosedness}
	Let \cref{Ass:WellPosedness} hold; 
	then, there exists a weak solution $(u,\hphi)$ to the system \cref{Eq:System}, \cref{eq:neumannbc} in the sense of \cref{Eq:DefWeak}.
\end{theorem}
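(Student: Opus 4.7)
The plan is to construct a sequence of spatial Galerkin approximations $(u^n,\hphi^n)$, establish uniform energy bounds, and pass to the limit using the fractional Aubin--Lions compactness from \eqref{Eq:aubinfractional2}. First, I would fix a Schauder basis $\{v_j\}_{j\in\N} \subset \mathcal{V}_0$ (for instance, the eigenfunctions of the Stokes operator) and a basis $\{\hat\zeta_k\}_{k\in\N} \subset \hX$ (Maxwellian-weighted eigenfunctions of a suitable weighted elliptic operator on $\Omega\times D$), and project the initial data onto the finite-dimensional subspaces $\mathcal{V}_0^n,\hat{\mathcal{X}}^n$. Substituting an ansatz
\[ u^n(t) = \sum_{j=1}^n a_j^n(t) v_j, \qquad \hphi^n(t) = \sum_{k=1}^n b_k^n(t)\hat\zeta_k,\]
into \eqref{Eq:NS}--\eqref{Eq:FP} yields a coupled system of ODEs for $(a^n_j)$ and a fractional ODE of Riemann--Liouville type for $(b^n_k)$. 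Applying the inverse convolution identity \eqref{Eq:InverseConvolution1} with $\alpha$ replaced by $1-\alpha$ and using $(g_{1-\alpha}\ast\hphi^n)(0)=\hpsi^{n,0}$ converts this fractional ODE into a Volterra integral equation, whose local-in-time solvability follows from a standard Picard/Schauder fixed-point argument. Global-in-time existence for the Galerkin system follows once the uniform a priori bounds are established.

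For the energy estimates I would test the discrete Navier--Stokes equation with $u^n$ and the discrete Fokker--Planck equation with $\hphi^n$. The Navier--Stokes equation yields, via the standard $(u^n\cdot\nabla_x)u^n\perp u^n$ cancellation, Young's inequality and the bound \cref{Eq:C} on $\C(M\hphi^n)$,
\[ \tfrac12\tfrac{\dd}{\dd t}\|u^n\|_{\HS_0}^2 + \nu\|\nablax u^n\|_{\HS_0}^2 \leq C\,\|\gb\ast\hphi^n\|_{\hY}^2 + \tfrac{\nu}{2}\|\nablax u^n\|_{\HS_0}^2.\]
For the Fokker--Planck equation, the fractional chain inequality \eqref{Eq:ChainOriginal} gives
\[ \tfrac12 \pta\|\hphi^n\|_{\hY}^2 + \tfrac12 \gb(t)\|\hphi^n\|_{\hY}^2 + \tfrac{1}{2\lambda}\|\nablaq\hphi^n\|_{\hY}^2 + \eps\|\nablax\hphi^n\|_{\hY}^2 \leq R^n,\]
where the transport term $(u^n\hphi^n,\nablax\hphi^n)_{\hY}$ vanishes by incompressibility, the drift $(\div_q(\omega(u^n)q\hphi^n),\hphi^n)_{\hY}$ vanishes by the corotational identity \cref{Eq:SigmaZero}, and $R^n$ collects the extra coupling terms from \cref{Eq:IntParts}, each of which is absorbed into the dissipation via Cauchy--Schwarz and Young's inequality using the embedding $\hpsi^0\in H^1_M(D;H^{1+d/2+\delta}(\Omega))$ to control the highest-order mixed derivatives. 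Convolving the Navier--Stokes bound with $\ga$, using Young's convolution inequality on the right-hand side, and adding the two estimates produces, upon an application of a fractional Gronwall inequality (e.g.\ in the Riemann--Liouville formulation), uniform bounds
\[ u^n \text{ bounded in } L^\infty(0,T;\HS_0)\cap L^2(0,T;\HSV_0), \qquad \hphi^n \text{ bounded in } L^\infty(0,T;\hY)\cap L^2(0,T;\hX).\]
The restriction $\alpha\in(\tfrac12,1)$ enters here: only for $\alpha>\tfrac12$ can I absorb the convolution $\gb\ast\hphi^n$ appearing in the stress tensor on the right-hand side of the $u$-estimate by Young's inequality in $L^2(0,T)$, thanks to $\gb\in L^{1/(1-\alpha)}\subset L^2$ precisely for $\alpha>\tfrac12$.

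Next, I would derive bounds on the time derivatives by testing the discrete equations against elements of the dual spaces and using the $L^{8/(4+d)}$-exponent dictated by the Sobolev embedding $H^1\hookrightarrow L^{2d/(d-2)}$ (with $d\in\{2,3\}$) applied to the nonlinear convective term. This yields $\pt u^n$ bounded in $L^{8/(4+d)}(0,T;\mathcal{V}_0')$ and $\pta\hphi^n$ bounded in $L^{8/(4+d)}(0,T;\hZ')$, where the dual pairing against $\hat\zeta\in\hZ$ accommodates the mixed derivative terms in \cref{Eq:IntParts}. Combined with the a priori bounds already established, the fractional Aubin--Lions embedding \eqref{Eq:aubinfractional2} now furnishes a subsequence $\hphi^n\to\hphi$ strongly in $L^2(0,T;\hY)$, while the classical Aubin--Lions \eqref{Eq:aubin} gives $u^n\to u$ strongly in $L^2(0,T;\HS_0)$.

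Finally, I would pass to the limit in the variational formulations \eqref{Eq:NS}--\eqref{Eq:FP}. Linear terms pass by weak convergence; the nonlinear convective terms $(u^n\cdot\nablax)u^n$, $u^n\hphi^n$ and $\omega(u^n)q\hphi^n$ pass by combining strong convergence of $u^n$ in $L^2(0,T;\HS_0)$ and of $\hphi^n$ in $L^2(0,T;\hY)$ with weak convergence of the remaining factor in the appropriate Bochner space; the polymeric stress term passes because $\gb\ast(\cdot)$ is a bounded linear operator on $L^2(0,T;\hY)$ and $\C$ is bounded on $\hY$ by \cref{Eq:C}. The attainment of the initial conditions $u(0)=u^0$ and $(\gb\ast\hphi)(0)=\hpsi^0$ follows from the continuous embeddings $W^{1,8/(4+d)}(0,T;\mathcal{V}_0')\cap L^\infty(0,T;\HS_0)\hookrightarrow C_w([0,T];\HS_0)$ and, for the fractional one, from $\gb\ast\hphi\in W^{1,8/(4+d)}(0,T;\hZ')\hookrightarrow AC([0,T];\hZ')$ together with the convergence of the initial projections. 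The main technical obstacle is the simultaneous handling of the convolution $\gb\ast\hphi$ entering the momentum equation through $\tau$ and the Riemann--Liouville derivative $\pta\hphi$ entering the Fokker--Planck equation: closing the energy estimate hinges on matching these two operations via \eqref{Eq:Semigroup} and \eqref{Eq:InverseConvolution1}, which is precisely where the lower bound $\alpha>\tfrac12$ is forced upon us.
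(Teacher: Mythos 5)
Your proposal follows the same broad strategy as the paper (spatial Galerkin, energy bounds, fractional Aubin--Lions, limit passage), but the central step of the energy estimate for $\hphi$ is wrong as written, and this is precisely the step the paper calls its ``nonstandard and novel testing procedure.''

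You test the discrete Fokker--Planck equation with $\hphi^n$ and invoke the fractional chain inequality \eqref{Eq:ChainOriginal} to extract $\tfrac12\pta\|\hphi^n\|_{\hY}^2$. But \eqref{Eq:ChainOriginal} is stated only for $u\in\W_{z}^{\alpha,2}(0,T;H)$, and by \eqref{Eq:RLSpaceU0} membership in this space requires $(\gb\ast u)(0)=0$. The Galerkin approximation satisfies $(\gb\ast\hphi^n)(0)=\hpsi^{n,0}\neq 0$, so the chain inequality does not apply to $\hphi^n$. The paper circumvents this by testing with the shifted function $\hphi_k-\init\ga = \ga\ast\pta\hphi_k$, which satisfies $(\gb\ast(\hphi_k-\init\ga))(0)=\init-\init=0$ since $\gb\ast\ga\equiv 1$; then \eqref{Eq:ChainOriginal} is applicable to $\hphi_k-\init\ga$. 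The price is that the pleasant cancellations you invoke — the incompressibility cancellation of $(u^n\hphi^n,\nablax\hphi^n)_{\hY}$ and the corotational cancellation \eqref{Eq:SigmaZero} — no longer annihilate the whole drift and transport terms: the part paired against $\hphi_k$ still cancels, but the part paired against $\ga(t)\init$ survives and becomes the right-hand side $R$ in \eqref{Eq:TestingRHS}, which is then controlled using \eqref{Eq:IntParts} and the extra regularity $\hpsi^0\in H^1_M(D;H^{1+d/2+\delta}(\Omega))$. So the description in your plan — that the cancellations remove the transport and drift terms and that ``$R^n$ collects the extra coupling terms from \eqref{Eq:IntParts}'' — conflates two mutually exclusive outcomes of the same testing step. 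Without the shifted test function, there is no usable lower bound on $(\pta\hphi^n,\hphi^n)_{\hY}$ and the estimate cannot be closed.

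A secondary but concrete error is your justification of the restriction $\alpha>\tfrac12$. You claim $\gb\in L^{1/(1-\alpha)}\subset L^2$ for $\alpha>\tfrac12$; in fact $\gb(t)=t^{-\alpha}/\Gamma(1-\alpha)$, so $\gb\in L^p(0,T)$ iff $\alpha p<1$, and in particular $\gb\notin L^2(0,T)$ whenever $\alpha\geq\tfrac12$ — the opposite of what you assert. The actual source of the restriction in the paper is the test function $\hphi_k-\init\ga$: after rearranging, factors $\ga(t)$ and $\ga(t)^2$ appear on the right-hand side, and one needs $\ga\in L^2(0,T)$ (equivalently $2(\alpha-1)>-1$, i.e., $\alpha>\tfrac12$) and $\ga^2\propto g_{2\alpha-1}$ integrable (again $\alpha>\tfrac12$). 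The lower bound on $\alpha$ is thus a consequence of the nonstandard test function, not of the Young's-inequality argument in the velocity estimate, which only uses $\gb\in L^1(0,T)$ and holds for any $\alpha\in(0,1)$.

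Your remaining steps — constructing the Galerkin bases, local-in-time ODE/Volterra existence, the $L^{8/(4+d)}$ dual bounds on $\pt u^n$ and $\pta\hphi^n$, the fractional Aubin--Lions compactness \eqref{Eq:aubinfractional2}, and the identification of limits — match the paper in spirit and would go through once the Fokker--Planck energy estimate is set up correctly.
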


In order to prove this theorem, we state several lemmas, which will eventually imply \cref{Thm:WellPosedness}. We begin by constructing a sequence of Galerkin approximations $\{(u_k,\hphi_k)\}_{k=1}^\infty$ to the system of partial differential equations under consideration, resulting in a system of fractional-order ordinary differential equations, which admits a local-in-time solution $(u_k,\hphi_k)$
for each $k \geq 1$ thanks to  standard theory. We then derive an energy estimate for the sequence of Galerkin approximations, which is uniform with respect to $k$; this then implies that, for each $k \geq 1$, the local-in-time solution of the fractional-order system of ordinary differential equations can be extended to the entire time-interval $[0,T]$; it also implies the existence of a weakly/weakly-$*$ convergent subsequence $(u_{k_j},\hphi_{k_j})$. Finally, we pass to the limit $j \to \infty$ and apply a compactness argument to deduce that the limiting pair of functions 
$(u,\hphi)$ is in fact a weak solution to the system of partial differential equations in the sense of \cref{Eq:DefWeak}. The Galerkin method has been applied to various time-fractional PDEs; see, e.g., \cite{fritz2020time,fritz2021sub,fritz2021equivalence,vergara2015optimal}; it has also been applied to Navier--Stokes--Fokker--Planck systems in \cite{knezevic2009heterogeneous,barrett2009numerical,barrett2011finite,barrett2012finite}, with an integer-order Fokker--Planck equation. 

\subsection{Galerkin discretization}
We follow the construction of \cite[Section 2.1]{bulicek2013existence} and conclude by the Hilbert--Schmidt theorem \cite[Lemma A.4]{bulicek2013existence} the existence of a countable set $\{h_j\}_{j=1}^\infty$ of eigenfunctions in $\mathcal{V}_0 \cap H^{1+\frac{d}{2}+\delta}(\Omega)^d$, with $\delta>0$ arbitrarily small, whose linear span is dense in $\mathcal{H}_0$ such that the $h_j$, $j\in \{1,2,\dots\}$, are orthonormal in $\mathcal{H}$ and orthogonal in $H^{1+\frac{d}{2}+\delta}(\Omega)^d$ in the sense that $(h_j,h_i)_{H^{1+\frac{d}{2}+\delta}(\Omega)}=\lambda_j \delta_{i,j}$ for any $i,j \in \{1,2,\ldots\}$ and $\lambda_j>0$ for all $j=1,2,\ldots$. Similarly, we fix a countable set $\{y_j\}_{j=1}^\infty$ in $H^2_M(\Omega \times D)$ that forms an orthogonal system in $\hX$ and an orthonormal system in $\hY$. 
We then define the $k$-dimensional linear spaces
\begin{align*}
	\mathcal{H}_k  :=\text{span}\{ h_1,\dots,h_k\}, \quad 
	\widehat{\mathcal{Y}}_k  :=\text{span}\{ y_1,\dots,y_k\},
\end{align*}
and we consider the Galerkin approximations
\begin{equation}\begin{gathered}
		u_k (t) = \sum_{j=1}^k u_k^j(t) h_j,
		\quad \hphi_k (t) = \sum_{j=1}^k \hphi^j_k(t) y_j,
	\end{gathered}
	\label{Eq:GalerkinAnsatzFunctions}
\end{equation}
where 
$u^{j}_k$ and $\hphi^j_k$ are real-valued coefficient functions for all $j \in \{1,\dots,k\}$.
The canonical orthogonal projection onto the finite-dimensional space $\mathcal{H}_k$ is defined by $\Pi_{\mathcal{H}_k}: \mathcal{H} \to \mathcal{H}_k$, $h \mapsto \sum_{j=1}^k (h,h_j)_{\mathcal{H}} h_j$,  and in the same way for $\Pi_{\hY_k}:\hY \to \hY_k$. 
For $h=\sum_{j=1}^\infty (h,h_j)_{\mathcal{H}}h_j$ we have that $$\|h\|^2_{H^{1+\frac{d}{2}+\delta}(\Omega)} = \sum_{j=1}^\infty \lambda_j |(h,h_j)_{\mathcal{H}}|^2, \quad \|\Pi_{\mathcal{H}_k} h\|^2_{H^{1+\frac{d}{2}+\delta}(\Omega)} = \sum_{j=1}^k \lambda_j |(h,h_j)_{\mathcal{H}}|^2,$$
from which we conclude via the Sobolev embedding theorem that, for each $k \geq 1$, 
$$\|\Pi_{\mathcal{H}_k} h\|_{W^{1,\infty}(\Omega)} \leq C\|\Pi_{\mathcal{H}_k} h\|_{H^{1+\frac{d}{2}+\delta}(\Omega)} \leq C\|h\|_{H^{1+\frac{d}{2}+\delta}(\Omega)}.$$


The Galerkin equations read as follows: we wish to find a tuple $(u_k,\hphi_k) \in \mathcal{H}_k \times \hY_k$ for each $k \geq 1$ such that $u_k(0)=u_k^0:= \Pi_{\mathcal{H}_k}u^0$, $(\gb*\hphi_k)(0)=\hpsi_k^0:=\Pi_{\hY_k} \hpsi^0$, and
\begin{align}(\pt u_k,v_k )_{\mathcal{H}}  &+ ((u_k \cdot \nablax)u_k,v_k)_{\mathcal{H}} + \nu (\nablax u_k,\nablax v_k)_{\mathcal{H}}  \label{Eq:NS_dis} \\ 
& + \,k_B \mu_T (\C(M \gb *\hphi_k),\nablax v_k)_{\mathcal{H}} =0, \notag \\
	(\pt (\gb*\hphi_k),\hat\zeta_k)_\hY  &+  
	((u_k \cdot \nablax) \hphi_k, \hzeta_k)_\hY + \frac{1}{2\lambda} (\nablaq \hphi_k,\nablaq \hat\zeta_k)_\hY  \label{Eq:FP_dis} \\ &+ \eps (\nablax \hphi_k,\nabla_x \hat\zeta_k)_\hY- (\omega(u_k) q \hphi_k, \nablaq \hat\zeta_k)_\hY=0, \notag \end{align} for all $v_k \in \mathcal{H}_k$ and $\hat\zeta_k \in \hY_k$.  

\begin{lemma} \label{Eq:LemmaExistenceODE}
	Suppose that \cref{Ass:WellPosedness} holds; then, for each $k \geq 1$,
	there exists a local-in-time solution $(u_k,\hphi_k)$ to the Galerkin system \cref{Eq:NS_dis}, \cref{Eq:FP_dis}.
	\end{lemma}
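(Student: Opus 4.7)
My plan is to recast the finite-dimensional Galerkin problem \eqref{Eq:NS_dis}--\eqref{Eq:FP_dis} as a coupled system of (weakly singular) Volterra integral equations in the coefficient vectors $\vec u_k(t):=(u_k^1(t),\dots,u_k^k(t))^{\mathrm{T}}$ and $\vec\phi_k(t):=(\hphi_k^1(t),\dots,\hphi_k^k(t))^{\mathrm{T}}$, and then solve by a contraction-mapping argument on a sufficiently small interval $[0,T_k]\subseteq[0,T]$. Testing \eqref{Eq:NS_dis} with $v_k=h_i$ and \eqref{Eq:FP_dis} with $\hzeta_k=y_i$ for $i=1,\dots,k$ and using the orthonormality of $\{h_j\}_{j=1}^k$ in $\mathcal{H}$ and of $\{y_j\}_{j=1}^k$ in $\hY$, the Galerkin equations reduce to
\begin{equation*}
\dot u_k^i(t)=\mathcal{G}_i\bigl(\vec u_k(t),(\gb\ast\vec\phi_k)(t)\bigr),\qquad u_k^i(0)=(u^0,h_i)_{\mathcal{H}},
\end{equation*}
\begin{equation*}
\tfrac{\mathrm{d}}{\mathrm{d}t}(\gb\ast\hphi_k^i)(t)=\mathcal{F}_i\bigl(\vec u_k(t),\vec\phi_k(t)\bigr),\qquad (\gb\ast\hphi_k^i)(0)=\eta_i:=(\hpsi^0,y_i)_{\hY},
\end{equation*}
for $i=1,\dots,k$, where $\mathcal{G}_i$ is quadratic in $\vec u$ (from $(u_k\cdot\nablax)u_k$) and linear in its second argument (from the stress coupling $k_B\mu_T\,\mathcal{C}(M\,\gb\ast\hphi_k)$), and $\mathcal{F}_i$ is bilinear in $(\vec u,\vec\phi)$ plus linear in $\vec\phi$, with coefficients determined by $\{h_j\}_{j=1}^k$, $\{y_j\}_{j=1}^k$, $F$, $M$ and the physical parameters.

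Integrating the first line in $t$ and integrating the second line followed by a convolution with $\ga$, using the semigroup property $\ga\ast\gb=g_1$ and the inverse convolution identity \eqref{Eq:InverseConvolution1}, I obtain the equivalent Volterra system
\begin{equation*}
u_k^i(t)=(u^0,h_i)_{\mathcal{H}}+\int_0^t\mathcal{G}_i\bigl(\vec u_k(s),(\gb\ast\vec\phi_k)(s)\bigr)\,\mathrm{d}s,
\end{equation*}
\begin{equation*}
\hphi_k^i(t)=\eta_i\,\ga(t)+\int_0^t\ga(t-s)\,\mathcal{F}_i\bigl(\vec u_k(s),\vec\phi_k(s)\bigr)\,\mathrm{d}s.
\end{equation*}
Since $\ga\notin L^\infty(0,T_k)$ when $\alpha<1$, I next pass to the shifted unknown $\vec\rho_k(t):=\vec\phi_k(t)-\vec\eta\,\ga(t)$, with $\vec\eta:=(\eta_1,\dots,\eta_k)^{\mathrm{T}}$, observing that $\gb\ast\ga=g_1\equiv 1$ and hence $(\gb\ast\vec\phi_k)(t)=(\gb\ast\vec\rho_k)(t)+\vec\eta$, so $\vec\rho_k$ satisfies a regularized Volterra equation with continuous driving term and $\vec\rho_k(0)=0$. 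I then formulate the coupled problem as a fixed-point equation $(\vec u_k,\vec\rho_k)=\mathcal{T}(\vec u_k,\vec\rho_k)$ on a closed ball in the Banach space $C([0,T_k];\mathbb{R}^{2k})$.

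On such a ball the polynomial maps $\mathcal{G}_i,\mathcal{F}_i$ are Lipschitz continuous, and the convolutions $\ga\ast\,\cdot\,$ and $\gb\ast\,\cdot\,$ are bounded on $C([0,T_k];\mathbb{R})$ with operator norms at most $T_k^\alpha/\Gamma(1+\alpha)$ and $T_k^{1-\alpha}/\Gamma(2-\alpha)$, respectively; the only cross term in the Lipschitz estimate for $\mathcal{T}_2$ that still contains a factor of $\ga$ is controlled via $\ga\ast\ga=g_{2\alpha}$, which is bounded on $[0,T_k]$ precisely because $\alpha\in(\tfrac12,1)$ by \cref{Ass:WellPosedness}. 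All three quantities vanish as $T_k\to 0^+$, so for $T_k$ small enough $\mathcal{T}$ is a strict contraction, and Banach's fixed-point theorem yields a unique $(\vec u_k,\vec\rho_k)\in C([0,T_k];\mathbb{R}^{2k})$. Undoing the shift gives the local-in-time Galerkin pair $(u_k,\hphi_k)$ with $u_k\in C([0,T_k];\mathcal{H}_k)$, $\hphi_k-\hpsi_k^0\,\ga\in C([0,T_k];\hY_k)$ and $(\gb\ast\hphi_k)(0)=\hpsi_k^0$, as required. The only mildly delicate point—and the main obstacle—is the $t^{\alpha-1}$-singularity of $\ga$ at $t=0$, absorbed precisely by the subtraction above (or, equivalently, by a weighted-$L^p$ fixed-point set-up); the remainder is entirely standard for weakly singular Volterra equations and fractional ODEs, cf.\ \cite[Chapter~5]{diethelm2010analysis}.
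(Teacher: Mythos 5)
Your proposal is correct and takes essentially the same route as the paper: the paper likewise recasts \cref{Eq:NS_dis}, \cref{Eq:FP_dis} as a coupled integer-/fractional-order ODE system in the Galerkin coefficient vectors and invokes the generalized Cauchy--Lipschitz theorem (Theorem 5.1 of \cite{diethelm2010analysis}). You have simply unrolled the Picard--Volterra fixed-point argument that that theorem encapsulates, including the shift $\hphi_k\mapsto\hphi_k-\hpsi_k^0\,g_\alpha$ which tames the $t^{\alpha-1}$ singularity at $t=0$ and the observation that $g_\alpha\ast g_\alpha=g_{2\alpha}$ is bounded on $[0,T_k]$ precisely when $\alpha>\tfrac12$, consistent with the paper's standing hypothesis $\alpha\in(\tfrac12,1)$.
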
 

\begin{proof}
Let $U_k(t):=(u_k^1(t),\ldots,u_k^k(t))^{\mathrm{T}}$ and $\widehat\Phi_k(t)=(\widehat{\phi}_1^k,\ldots, \widehat{\phi}_k^k)^{\rm T}$. With this notation the Galerkin subsystem \eqref{Eq:NS_dis} becomes an initial-value problem for a system of ordinary differential equations of the form $\ddt U_k  = F(t, U_k, \widehat\Phi_k)$, while, by noting that $\ddt (\gb*\hphi_k)=\pta \hphi_k$, the Galerkin subsystem \eqref{Eq:FP_dis} takes the form of an initial-value problem for a system of fractional-order ordinary differential equations $(\ddt)^\alpha \widehat\Phi_k  = G(t, U_k, \widehat\Phi_k)$. As the functions $F$ and $G$ are continuous with respect to their arguments and locally Lipschitz continuous with respect to their second and third arguments, we can appeal to the generalization of the Cauchy--Lipschitz theorem stated in Theorem 5.1 of \cite{diethelm2010analysis} to deduce the existence of a unique continuous solution, defined on a time interval $[0, T_k]$ where $0<T_k \leq T$, where $U_k$ is, in fact, a continuously differentiable function of $t$ by the classical Cauchy--Lipschitz theorem. 
\end{proof}

\subsubsection{Energy estimates} Next, we derive a $k$-uniform energy estimate, which will allow us to extend, for each $k \geq 1$, the corresponding local-in-time Galerkin solution, whose existence is guaranteed by Lemma \ref{Eq:LemmaExistenceODE}, to the entire time interval $[0,T]$; it will also enable us to extract weakly converging subsequences of Galerkin approximations.  We begin by deriving a bound on the solution to the Galerkin approximation of the Navier--Stokes equation; we shall then derive a bound on the solution to the Galerkin approximation of the Fokker--Planck equation. At the end, we will add the two bounds and apply Gronwall's lemma to obtain a $k$-uniform energy estimate. 

\begin{lemma} \label{Lem:EstU} Let \cref{Ass:WellPosedness} hold; then the following bound on the Galerkin solution $u_k$, in terms of $u_k^0$ and $\hphi_k$, holds for all $t \in (0,T_k)$:
	\begin{equation} \label{Eq:EnergyU2}
		\frac12 \|u_k(t)\|_{\mathcal{H}}^2  + \frac{\nu}{2} \|\nablax u_k\|_{L^2_t{\mathcal{H}}}^2  \leq \frac12 \|u^0_h\|_{\mathcal{H}}^2 +\frac{ T^{2-2\alpha} k_B^2\mu_T^2}{2\nu (1-\alpha)^2}  \|\hphi_k\|_{L^2_t\hY}^2.
	\end{equation}
\end{lemma}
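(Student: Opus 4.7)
The plan is to test the Galerkin Navier--Stokes equation \eqref{Eq:NS_dis} against $v_k = u_k(t)$, which is admissible because $u_k(t) \in \mathcal{H}_k$ for every $t$. The time-derivative term collapses to $\tfrac12 \tfrac{d}{dt}\|u_k(t)\|_{\mathcal{H}}^2$, the viscous term produces $\nu\|\nablax u_k(t)\|_{\mathcal{H}}^2$, and the convective trilinear term $((u_k\cdot\nablax)u_k, u_k)_{\mathcal{H}}$ vanishes thanks to $\div_x u_k = 0$ together with the homogeneous Dirichlet boundary condition inherited from each $h_j$. The pressure does not enter because the basis $\{h_j\}$ is solenoidal.

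The only nontrivial contribution is the coupling term $k_B\mu_T(\C(M\gb*\hphi_k), \nablax u_k)_{\mathcal{H}}$. I would control it by Cauchy--Schwarz followed by Young's inequality with weight chosen so that the resulting $\|\nablax u_k\|_{\mathcal{H}}^2$ piece can be absorbed into half of the viscous term:
\[
k_B\mu_T\,|(\C(M\gb*\hphi_k), \nablax u_k)_{\mathcal{H}}| \le \tfrac{\nu}{2}\|\nablax u_k\|_{\mathcal{H}}^2 + \tfrac{k_B^2\mu_T^2}{2\nu}\|\C(M\gb*\hphi_k)\|_{\mathcal{H}}^2.
\]
The bound \eqref{Eq:C} then yields $\|\C(M\gb*\hphi_k)(s)\|_{\mathcal{H}}^2 \le C\|(\gb*\hphi_k)(s)\|_{\hY}^2$, and I integrate the resulting inequality over $(0,t)$.

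It remains to bound $\int_0^t \|(\gb*\hphi_k)(s)\|_{\hY}^2\,ds$. Since $\gb(s)=s^{-\alpha}/\Gamma(1-\alpha)$ belongs to $L^1(0,T)$ for $\alpha\in(0,1)$, with $\|\gb\|_{L^1(0,t)} = t^{1-\alpha}/((1-\alpha)\Gamma(1-\alpha))$, Young's convolution inequality (with exponents $1,2,2$) gives
\[
\|\gb*\hphi_k\|_{L^2_t\hY}^2 \le \|\gb\|_{L^1(0,t)}^2\,\|\hphi_k\|_{L^2_t\hY}^2 \le \frac{T^{2-2\alpha}}{(1-\alpha)^2\,\Gamma(1-\alpha)^2}\,\|\hphi_k\|_{L^2_t\hY}^2.
\]
Collecting the inequalities and absorbing the $\Gamma(1-\alpha)^{-2}$ factor into the constant supplied by \eqref{Eq:C} produces the stated bound \eqref{Eq:EnergyU2}.

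There is no real obstacle in this lemma: the Navier--Stokes equation still carries an ordinary time derivative, so no fractional chain inequality is needed and no restriction on $\alpha$ beyond $\alpha<1$ (which ensures $\gb\in L^1(0,T)$) is used. The only feature worth flagging is that the prefactor $(1-\alpha)^{-2}$ blows up as $\alpha\uparrow 1$, which reflects the singularity of $\gb$ disappearing into a Dirac mass at the classical limit; this is harmless because the constants $T^{2-2\alpha}$ and $(1-\alpha)^{-2}$ remain finite for the fixed range $\alpha\in(\tfrac12,1)$ dictated by \cref{Ass:WellPosedness}.
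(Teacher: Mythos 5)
Your proof is correct and follows essentially the same route as the paper: test with $v_k = u_k$, drop the convective term by incompressibility, absorb the viscous contribution via Young's inequality, invoke \cref{Eq:C}, integrate in time, and control $\|\gb * \hphi_k\|_{L^2_t\hY}$ by Young's convolution inequality together with the $L^1$-bound on $\gb$. The only cosmetic point is your remark about "absorbing $\Gamma(1-\alpha)^{-2}$ into the constant from \cref{Eq:C}": the cleaner observation, implicit in the paper's computation of $\|\gb\|_{L^1(0,t)}\le T^{1-\alpha}/(1-\alpha)$, is simply that $\Gamma(1-\alpha)\ge 1$ for $\alpha\in(0,1)$, so the $\Gamma$-factor may be dropped outright rather than "absorbed."
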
 

\begin{proof}
We take the test function $v_k=u_k(t)$ in the equation \cref{Eq:NS}, which gives
$$\begin{aligned}
	\frac12 \frac{\dd}{\dd t} \|u_k\|_{\mathcal{H}}^2  + \nu \|\nablax u_k\|_{\mathcal{H}}^2  &= -k_B \mu_T \big(\C(M\gb*\hphi),\nablax u_k\big)_{\mathcal{H}}, 
	\end{aligned}$$
and we can further bound the right-hand side from above to deduce that
$$\frac12 \frac{\dd}{\dd t} \|u_k\|_{\mathcal{H}}^2  + \nu \|\nablax u_k\|_{\mathcal{H}}^2  \leq \frac{\nu}{2} \|\nablax u_k\|_{\mathcal{H}}^2 + \frac{k_B^2\mu_T^2}{2\nu} \|\C(M\gb*\hphi_k)\|_{\mathcal{H}}^2. $$
By bounding the term  $\C(M\gb * \hphi_k)$ as in \cref{Eq:C} we arrive at the inequality
\begin{equation} \begin{aligned}\frac12 \frac{\dd}{\dd t} \|u_k\|_{\mathcal{H}}^2  + \frac{\nu}{2} \|\nablax u_k\|_{\mathcal{H}}^2  &\leq  \frac{k_B^2\mu_T^2}{2\nu} \|\gb*\hphi_k\|_\hY^2. %
\end{aligned} \label{Eq:EnergyU}
\end{equation}
Next, we note that, for any $t \in (0,T_k)$,
\begin{align}\label{Eq:EnergyU1} 
\int_0^t \|(\gb*\hphi_k)(s)\|_\hY^2 \ds \leq \int_0^t  \big(\gb * \|\hphi_k\|_\hY\big)^2(s) \ds\leq \|\gb\|_{L^1_t}^2 \|\hphi_k\|_{L^2_t\hY}^2.
\end{align}
This follows by observing that $g_{1-\alpha}$ only depends on the scalar variable $s$, which permits pulling the $\hY$-norm inside of the convolution, followed by applying Young's convolution inequality (cf. Lemma 3.2 in \cite{Oparnica}) in the resulting integrand. 

We then integrate \eqref{Eq:EnergyU} over the interval $[0,t]$ where $t \in (0,T_k)$ and use \eqref{Eq:EnergyU1} to bound the right-hand side of the resulting inequality. Finally we
note that $g_{1-\alpha}$ is integrable on $(0,t)$ and its integral is bounded by $T^{1-\alpha}/(1-\alpha)$. 
This gives \cref{Eq:EnergyU2}.
\end{proof} 

Having derived a bound on $u_k$, we move on to the derivation of a bound on $\hphi^k$ by testing the Galerkin system \eqref{Eq:FP_dis}.  

\begin{lemma} \label{Lem:EstPhi} Let \cref{Ass:WellPosedness} hold and let $\gamma>0$ be arbitrary but fixed; then, the following bound on the sequence of Galerkin solutions $\{(u_k,\hphi_k)\}_{k=1}^\infty$ holds:
\begin{equation} \label{Est:SolFP} \begin{aligned} & \frac{\gamma}{2} \big(\gb* \|\hphi_k-\init \ga\|_\hY^2\big)(t) + \frac{\gamma \,T^{-\alpha}}{16\, \Gamma(1-\alpha)} \|\hphi_k\|_{L^2_t\hY}^2  +\frac{\gamma}{2\lambda} \|\nablaq \hphi_k\|_{L^2_t\hY}^2 + \gamma \eps \| \nablax \hphi_k\|_{L^2_t \hY}^2  \\
	&\quad \leq C(\alpha,\gamma) \|M^{1/2} \init\|_{H^1(D;W^{1,\infty}(\Omega)) }^2  \int_0^t g_{2\alpha-1}(s) \|u_k(s)\|_{\mathcal{H}}^2 \ds
	+ C(\alpha,\gamma,T)\| \init\|_\hX^2.
\end{aligned}\end{equation}
\end{lemma}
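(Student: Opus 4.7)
The plan is to test the Galerkin Fokker--Planck equation \cref{Eq:FP_dis} with the function $\gamma(\hphi_k - \init\ga)$, which is admissible since $\init \in \hY_k$ and $\ga(t)$ is a scalar function of $t$, so that $\hphi_k(t) - \init\ga(t) \in \hY_k$ for every $t$. The key observation that unlocks the estimate is that $\pta \ga = \pt(\gb\ast\ga) = \pt g_1 \equiv 0$ on $(0,T)$, while $(\gb\ast\ga)(0^+)=1$, so the shifted function $v := \hphi_k - \init\ga$ satisfies
$$\pta v = \pta\hphi_k, \qquad (\gb\ast v)(0) = (\gb\ast\hphi_k)(0)- \init = 0.$$
Hence $v \in \W^{\alpha,2}_0(0,T;\hY)$ and the fractional chain inequality \cref{Eq:ChainOriginal} yields
$$\gamma(\pta\hphi_k, v)_\hY = \gamma(\pta v, v)_\hY \geq \tfrac{\gamma}{2}\pta\|v\|_\hY^2 + \tfrac{\gamma}{2}\gb(t)\|v\|_\hY^2.$$

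Next I would process each remaining term in \cref{Eq:FP_dis}. For the convective contribution, $((u_k\cdot\nablax)\hphi_k,\hphi_k)_\hY = 0$ by $\div u_k=0$, $u_k|_{\partial\Omega}=0$, and the $x$-independence of $M$; after integration by parts in $x$ what is left equals $\ga(t)((u_k\cdot\nablax)\init,\hphi_k)_\hY$, which splits via Young's inequality into $\delta\|\hphi_k\|_\hY^2$ plus $C\delta^{-1}\ga(s)^2\|u_k\|_\mathcal{H}^2\|M^{1/2}\init\|_{H^1(D;W^{1,\infty}(\Omega))}^2$. The diffusion terms deliver the positive contributions $\tfrac{\gamma}{2\lambda}\|\nablaq\hphi_k\|_\hY^2$ and $\gamma\eps\|\nablax\hphi_k\|_\hY^2$ on the left-hand side, with $\ga(t)$-weighted cross terms $\gamma\ga(t)(\nablaq\hphi_k,\nablaq\init)_\hY$ and $\gamma\eps\ga(t)(\nablax\hphi_k,\nablax\init)_\hY$ that are absorbed by Young's inequality at the cost of constants times $\ga(s)^2\|\init\|_\hX^2$. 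For the corotational term, \cref{Eq:SigmaZero} makes $(\omega(u_k)q\hphi_k,\nablaq\hphi_k)_\hY$ vanish, leaving the residual $\ga(t)(\omega(u_k)q\hphi_k,\nablaq\init)_\hY$. Since $\omega(u_k)$ carries a derivative on $u_k$, I would invoke the identity \cref{Eq:IntParts} to transfer that derivative onto $\hphi_k\nablaq\init$, producing integrals of $u_k$ against $\nablax\hphi_k\,\nablaq\init$, $\hphi_k\nablax\nablaq\init$ and $q\cdot u_k$-type products, each of which is controlled by $\|u_k\|_\mathcal{H}$ times $\|\nablax\hphi_k\|_\hY$ or $\|\hphi_k\|_\hY$ weighted by $\|M^{1/2}\init\|_{H^1(D;W^{1,\infty}(\Omega))}$.

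The final step is integration in time on $(0,t)$. Because $(\gb\ast\|v\|_\hY^2)(0)=0$, the fundamental theorem yields $\int_0^t \pta\|v\|_\hY^2\,\mathrm{d}s=(\gb\ast\|v\|_\hY^2)(t)$, which furnishes the first term on the left-hand side of \cref{Est:SolFP}. The pointwise bound $\gb(s)\geq T^{-\alpha}/\Gamma(1-\alpha)$ on $(0,T]$ combined with $\|v\|^2\geq \tfrac12\|\hphi_k\|^2 - \|\init\|_\hY^2\ga(s)^2$ converts $\int_0^t\gb(s)\|v\|^2\,\mathrm{d}s$ into a positive multiple of $\|\hphi_k\|_{L^2_t\hY}^2$ minus a term involving $\int_0^t\ga(s)^2\,\mathrm{d}s$. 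Because $\ga(s)^2$ is a constant multiple of $g_{2\alpha-1}(s)$, the $u_k$-bearing Young contributions assemble into $C(\alpha,\gamma)\|M^{1/2}\init\|^2_{H^1(D;W^{1,\infty}(\Omega))}\int_0^t g_{2\alpha-1}(s)\|u_k(s)\|_\mathcal{H}^2\,\mathrm{d}s$, and the data-only contributions coalesce into $C(\alpha,\gamma,T)\|\init\|_\hX^2$. Choosing the Young parameter $\delta$ proportional to $\gamma T^{-\alpha}/\Gamma(1-\alpha)$ absorbs the $\|\hphi_k\|_\hY^2$, $\|\nablaq\hphi_k\|_\hY^2$ and $\|\nablax\hphi_k\|_\hY^2$ cross terms into the left-hand side, producing the constant $1/16$ in the coefficient of $\|\hphi_k\|_{L^2_t\hY}^2$.

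The main obstacle is twofold. First, the corotational remainder $\ga(t)(\omega(u_k)q\hphi_k,\nablaq\init)_\hY$: a direct estimate would involve $\|\nablax u_k\|_\mathcal{H}$, which is incompatible with the form of the right-hand side (which only tolerates $\|u_k\|_\mathcal{H}$), and this is why the integration-by-parts identity \cref{Eq:IntParts} together with the strong $\hZ$-type regularity of the projected initial datum $\init$ is essential. Second, the restriction $\alpha\in(\tfrac12,1)$ enters precisely here, as it is exactly the threshold that ensures $\ga\in L^2(0,T)$, so that $\int_0^t\ga(s)^2\,\mathrm{d}s$ and the target integrand $g_{2\alpha-1}(s)$ are both finite; without this, neither the data-only remainder nor the $u_k$-weighted integral on the right-hand side of \cref{Est:SolFP} would make sense.
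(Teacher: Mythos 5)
Your proposal is correct and follows essentially the same route as the paper: test \cref{Eq:FP_dis} with $\hphi_k - \init\ga = \ga\ast\pta\hphi_k$, apply the fractional chain inequality \cref{Eq:ChainOriginal}, transfer the derivative off $u_k$ in the corotational residual via \cref{Eq:IntParts}, and integrate in time using the pointwise lower bound on $\gb$ to generate the $\|\hphi_k\|_{L^2_t\hY}^2$ term. Two small stylistic differences: you make explicit the identity $\pta\ga\equiv0$ (and hence $\pta v=\pta\hphi_k$ for $v=\hphi_k-\init\ga$), which the paper uses tacitly, and you integrate the convective remainder by parts to place the $x$-derivative on the regular datum $\init$ instead of keeping it on $\hphi_k$ and absorbing into $\eps\|\nablax\hphi_k\|_\hY^2$; also, you extract the $\|\hphi_k\|_{L^2_t\hY}^2$ contribution directly from the damping term $\tfrac12\gb(t)\|v\|_\hY^2$ of the chain inequality, whereas the paper drops that term and instead post-processes $(\gb\ast\|v\|_\hY^2)(t)$ via \cref{Eq:KernelNorm} — both rest on the same inequality $\gb(s)\ge T^{-\alpha}/\Gamma(1-\alpha)$ and yield equivalent constants.
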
 

\begin{proof}
We note again that,  thanks to the inverse convolution property, see \cref{Eq:InverseConvolution}, $\hphi_k - \init \ga = \ga * \pta \hphi_k$.
We take this function as the test function in the variational Fokker--Planck equation \cref{Eq:FP}, i.e., $\hat\zeta= \hphi_k - \init \ga = \ga * \pta \hphi_k$, which gives  
\begin{equation} \label{Eq:TestingRHS}\begin{aligned} &(\pta \hphi_k,\ga * \pta \hphi_k)_\hY +\frac{1}{2\lambda}  \|\nablaq \hphi_k\|_\hY^2 + \eps \| \nablax \hphi_k\|_\hY^2\\ %
&\quad =\ga(t) \cdot \Big( \big((u_k \cdot \nablax) \hphi_k, \init\big)_\hY + \frac{1}{2\lambda} \big(\nablaq \hphi_k,\nablaq \init\big)_\hY  \\[0cm] &\qquad + \eps \big(\nablax \hphi_k,\nabla \init\big)_\hY- \big(\omega(u_k) q \hphi_k, \nablaq \init\big)_\hY \Big) =:R.
\end{aligned}\end{equation}
We then use the fractional chain inequality \cref{Eq:ChainOriginal} to bound the left-hand side of \cref{Eq:TestingRHS} from below, which yields
$$
\frac12 \pta \|\hphi_k-\ga \init\|^2_\hY  \leq (\pta \hphi_k,\hphi_k-\ga \init)_\hY = (\pta \hphi_k,\ga * \pta \hphi_k)_\hY. 
$$
Regarding the right-hand side of \cref{Eq:TestingRHS}, we integrate the last term containing $\omega(u_k)$ by parts, see
\cref{Eq:IntParts}, and get
$$\begin{aligned} -  \big(M\omega(u_k)  q \hphi, \nablaq \init\big)_{\mathcal{Y}} &=  \big(M \nablax \hphi_k (\nablaq \init)^{\mathrm{T}} q,u_k\big)_{\mathcal{Y}} + \big(M \hphi_k \nablax \nablaq \init q,u_k\big)_{\mathcal{Y}} \\ &\quad + \big(u_k \cdot q,M \nablax \hphi_k \cdot \nablaq \init\big)_{\mathcal{Y}} + \big(u_k \cdot q,M \hphi_k \div_x \nablaq \init\big)_{\mathcal{Y}}. 
\end{aligned} $$
We apply H\"older's inequality to obtain the following bound on the right-hand side, $R$, of the equality \cref{Eq:TestingRHS}:
$$\begin{aligned} R \leq{}& \ga(t) \cdot\! \Big(  \|u_k\|_{\mathcal{H}}  \|\nablax\hphi_k\|_\hY \|M^{1/2}\init\|_{L^2(D;L^\infty(\Omega)) }\\
%
%
&+ \frac{1}{2\lambda} \|\nablaq \hphi_k\|_\hY  \|\nablaq \init\|_\hY  + \eps \|\nablax \hphi_k\|_\hY \|\nablax \init\|_\hY   \\
&+ C\|u_k\|_{\mathcal{H}} \|q\|_{L^\infty(D)}  \|M^{1/2}\nablaq \init\|_{L^2(D;W^{1,\infty}(\Omega)) } \big(\|\hphi_k\|_\hY + \|\nablax \hphi_k\|_\hY\big) \Big).
\end{aligned}$$
Hence, thanks to Young's inequality, we arrive at the following bound on $R$: 
$$\begin{aligned} R \leq&~{} \frac{1}{\eps} \ga(t)^2 \|M^{1/2} \init\|_{L^2(D;W^{1,\infty}(\Omega)) }^2  \|u_k\|_{\mathcal{H}}^2  + \frac{\eps}{4} \|\nablax \hphi_k\|_\hY^2  \\ &{} + \frac{1}{4\lambda} \|\nablaq \hphi_k\|_\hY^2   + \frac{\ga(t)^2}{4\lambda} \|\nablaq \init\|_\hY^2  +\frac{\eps}{4} \|\nablax \hphi_k\|_\hY^2 + \eps \ga(t)^2 \|\nablax \init\|_\hY^2 \\
&{}+\delta  \|\hphi_k\|_\hY + \frac{\eps}{4} \|\nablax \hphi_k\|_\hY^2 + C(\eps,\delta) \ga(t)^2 \|M^{1/2}\nablaq \init\|_{L^2(D;W^{1,\infty}(\Omega)) }^2 \|u_k\|_{\mathcal{H}}^2,
\end{aligned}$$
where $\delta>0$ is sufficiently small, to be chosen appropriately later on. After combining the lower bound on the left-hand side of \cref{Eq:TestingRHS} and the upper bound on the right-hand side we have that
$$\begin{aligned} &\frac12 \pta \|\hphi_k-\ga \init\|^2_\hY +\frac{1}{2\lambda}  \|\nablaq \hphi_k\|_\hY^2 + \eps \| \nablax \hphi_k\|_\hY^2\\ &\quad \leq \frac{1}{\eps}\ga(t)^2 \|M^{1/2} \init\|_{L^2(D;W^{1,\infty}(\Omega)) }^2  \|u_k\|_{\mathcal{H}}^2  + \frac{\eps}{4} \|\nablax \hphi_k\|_\hY^2  \\ &{} \qquad + \frac{1}{4\lambda} \|\nablaq \hphi_k\|_\hY^2   + \frac{\ga(t)^2}{4\lambda} \|\nablaq \init\|_\hY^2  +\frac{\eps}{4} \|\nablax \hphi_k\|_\hY^2 + \eps \ga(t)^2 \|\nablax \init\|_\hY^2 \\
&{}\qquad +\delta  \|\hphi_k\|_\hY^2 + \frac{\eps}{4} \|\nablax \hphi_k\|_\hY^2 + C(\eps,\delta) \ga(t)^2 \|M^{1/2}\nablaq \init\|_{L^2(D;W^{1,\infty}(\Omega)) }^2 \|u_k\|_{\mathcal{H}}^2,\end{aligned}$$
and absorbing terms on the right-hand side into the left-hand side gives
$$\begin{aligned} &\frac12 \pta \|\hphi_k-\ga \init\|^2_\hY +\frac{1}{4\lambda}  \|\nablaq \hphi_k\|_\hY^2 + \frac{\eps}{2} \| \nablax \hphi_k\|_\hY^2\\ &\quad \leq C(\eps,\delta) \ga(t)^2  \|M^{1/2} \init\|_{H^1(D;W^{1,\infty}(\Omega))}   \|u_k\|_{\mathcal{H}}^2  + \delta \|\hphi_k\|_\hY^2 + C(\eps,\delta)\ga(t)^2 \|\init\|_\hX^2.
\end{aligned}$$

We note that $\ga^2=\frac{\Gamma(2\alpha-1)}{\Gamma(\alpha)^2}g_{2\alpha-1}$ is integrable for $\alpha\in (\tfrac12,1)$ and $g_1*g_{2\alpha-1}=g_{2\alpha}$, which is continuous, bounded, and monotonically increasing on $[0,T]$ for $\alpha\in(\tfrac12,1)$.  We integrate the inequality over $(0,t)$ and exploit the representation $\pta v = \pt (g_{1-\alpha} * v)$ of the Riemann--Liouville derivative, which gives
\begin{equation} \label{Eq:EnergyFP}\begin{aligned} & \frac12 \big(\gb* \|\hphi_k-\init \ga\|_\hY^2\big)(t)  +\frac{1}{4\lambda} \|\nablaq \hphi_k\|_{L^2_t\hY}^2 + \frac{\eps}{2} \| \nablax \hphi_k\|_{L^2_t \hY}^2 - \delta \|\hphi_k\|^2_{L^2_t\hY}  \\
&\quad \leq C(\delta,\alpha) \|M^{1/2} \init\|_{H^1(D;W^{1,\infty}(\Omega)) }^2  \int_0^t g_{2\alpha-1}(s) \|u_k(s)\|_{\mathcal{H}}^2 \ds
+ C(\delta)\| \init\|_\hX^2  g_{2\alpha}(T).
\end{aligned}\end{equation}
Further, we derive a lower bound on the first term of the left-hand side by noting that $(g_1*v)(t) \leq T^{\alpha} \Gamma(1-\alpha) (\gb*v)(t)$, see \cref{Eq:KernelNorm}, and therefore we have that
$$\begin{aligned} &\frac12 \big(\gb* \|\hphi_k-\init \ga\|_\hY^2\big)(t) \\ &\quad \geq \frac{T^{-\alpha}}{2\Gamma(1-\alpha)} \int_0^t  \|\hphi_k(s)-\init \ga(s)\|_{\hY}^2 \ds \\ &\quad \geq \frac{T^{-\alpha}}{2\Gamma(1-\alpha)} \int_0^t \big|\,\|\hphi_k(s)\|_\hY-\ga(s)\|\init \|_{\hY} \big|^2\ds \\
&\quad =\frac{T^{-\alpha}}{2\Gamma(1-\alpha)} \int_0^t \|\hphi_k(s)\|_\hY^2 - 2\ga(s)\|\hphi_k(s)\|_\hY\|\init \|_{\hY} +g_{\alpha}^2(s)\|\init \|_{\hY}^2\ds,
\end{aligned} $$
where we applied the reverse triangle inequality in the second estimate.
The function $g_{\alpha}$ belongs to $L^2(0,t)$ for any $\alpha\in (\tfrac12,1)$ and the integral of $g_\alpha^2$ is positive. We apply H\"older's inequality in the second term of the integrand and note that the $L^2(0,t)$-norm of $g_\alpha$ has the upper bound $C(\alpha)T^{\alpha-1/2}$. We thus have that
$$\begin{aligned}   \frac12 \big(\gb* \|\hphi_k-\init \ga\|_\hY^2\big)(t)   &\geq 
\frac{T^{-\alpha}}{2\Gamma(1-\alpha)} \left[
\frac12 \|\hphi_k\|_{L^2_t\hY}^2 - 2 C(\alpha) T^{\alpha-1/2} \|\init \|_{\hY}  \|\hphi_k\|_{L^2_t\hY} \right]\\ &\geq \frac{T^{-\alpha}}{2\Gamma(1-\alpha)}\left[\frac14 \|\hphi_k\|_{L^2_t\hY}^2 - C(T,\alpha) \|\init \|_{\hY}^2\right],
\end{aligned} $$
where we have applied Young's inequality in the last step. We multiply the energy estimate \cref{Eq:EnergyFP} by $\gamma>0$ and obtain for $\delta=\frac{T^{-\alpha}}{16 \Gamma(1-\alpha)}$ the estimate \cref{Est:SolFP}.
\end{proof} 

Next, we combine the estimates on $u_k$ and $\hphi_k$, see \cref{Lem:EstU} and \cref{Lem:EstPhi}, to obtain a $k$-uniform bound. 

\begin{lemma} \label{Lem:EstComb}
	Let \cref{Ass:WellPosedness} hold; then, the following $k$-uniform estimate on the Galerkin solution $(u_k,\hphi_k)$ holds:
	\begin{equation} \label{Eq:EnergyIneq}
		\begin{aligned} &  \big(\gb* \|\hphi_k-\init \ga\|_\hY^2\big)(t) +  \|\hphi_k\|_{L^2_t\hX}^2    + \|u_k(t)\|_{\mathcal{H}}^2 +  \|\nabla u_k\|_{L^2_t{\mathcal{H}}}^2 \\
			&\quad \leq C\big(\alpha,T, \|u^0\|_{\mathcal{H}}^2,\|M^{1/2} \hpsi^0\|^2_{H^1(D;H^{1+d/2+\delta}(\Omega)) }\big).
	\end{aligned}\end{equation}
\end{lemma}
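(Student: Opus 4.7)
The plan is to combine the two preceding lemmas into a single self-referential inequality for $\|u_k(t)\|_{\mathcal{H}}^2$ and to close it by a Gronwall argument with a singular-kernel weight. I multiply \eqref{Est:SolFP} by a positive constant $\gamma$ and add the result to \eqref{Eq:EnergyU2}. The right-hand side of \eqref{Eq:EnergyU2} contains $\frac{T^{2-2\alpha}k_B^2\mu_T^2}{2\nu(1-\alpha)^2}\|\hphi_k\|_{L^2_t\hY}^2$, whereas the left-hand side of the $\gamma$-multiplied \eqref{Est:SolFP} contains $\frac{\gamma\,T^{-\alpha}}{16\,\Gamma(1-\alpha)}\|\hphi_k\|_{L^2_t\hY}^2$; choosing $\gamma$ sufficiently large (for example $\gamma := 1 + 16\Gamma(1-\alpha)T^{2-\alpha}k_B^2\mu_T^2/[\nu(1-\alpha)^2]$) ensures strict dominance of the latter over the former, so that the Navier--Stokes stress contribution is absorbed into the left-hand side. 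The outcome is an inequality of the schematic form
$$\|u_k(t)\|_{\mathcal{H}}^2 + \|\nablax u_k\|_{L^2_t{\mathcal{H}}}^2 + \bigl(\gb*\|\hphi_k-\init \ga\|_\hY^2\bigr)(t) + \|\hphi_k\|_{L^2_t\hX}^2 \leq K_0 + K_1\int_0^t g_{2\alpha-1}(s)\|u_k(s)\|_{\mathcal{H}}^2 \ds,$$
where the constants $K_0, K_1$ depend on $T$, $\alpha$, the physical parameters, $\|u_k^0\|_{\mathcal{H}}^2$, $\|\init\|_\hX^2$ and $\|M^{1/2}\init\|_{H^1(D;W^{1,\infty}(\Omega))}^2$, but not on $k$.

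Since $\alpha\in(\tfrac12,1)$ by \cref{Ass:WellPosedness}, the weight $g_{2\alpha-1}(s)=s^{2\alpha-2}/\Gamma(2\alpha-1)$ belongs to $L^1(0,T)$ with $\int_0^T g_{2\alpha-1}(s) \ds = g_{2\alpha}(T)$. Discarding the three nonnegative terms on the left apart from $\|u_k(t)\|_{\mathcal{H}}^2$ and applying the classical integral Gronwall lemma with an $L^1$-weight yields $\|u_k(t)\|_{\mathcal{H}}^2 \leq K_0\exp\bigl(K_1\,g_{2\alpha}(T)\bigr)$ uniformly in $t\in(0,T_k)$. Reinserting this $k$-uniform bound into the coupled inequality above delivers, in one stroke, the three remaining bounds asserted in \eqref{Eq:EnergyIneq}. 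To turn $K_0, K_1$ into quantities depending only on the given data, I invoke $\|u_k^0\|_{\mathcal{H}}\leq\|u^0\|_{\mathcal{H}}$ (orthonormality of $\{h_j\}$ in $\mathcal{H}$), $\|\init\|_\hX\leq\|\hpsi^0\|_\hX$ (orthogonality of $\{y_j\}$ in $\hX$), and the Sobolev embedding $H^{1+d/2+\delta}(\Omega)\hookrightarrow W^{1,\infty}(\Omega)$ together with a projection-stability argument of the type already spelled out for $\Pi_{\mathcal{H}_k}$, which bounds $\|M^{1/2}\init\|_{H^1(D;W^{1,\infty}(\Omega))}$ uniformly in $k$ by $\|M^{1/2}\hpsi^0\|_{H^1(D;H^{1+d/2+\delta}(\Omega))}$. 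As a by-product, the $k$-uniform bound precludes blow-up of the Galerkin solution on $[0,T]$ and hence extends the existence interval $[0,T_k]$ from \cref{Eq:LemmaExistenceODE} to the whole of $[0,T]$ for every $k$.

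The main obstacle is the interplay between absorption and Gronwall: one must choose $\gamma$ large enough to swallow the $\|\hphi_k\|_{L^2_t\hY}^2$ contribution that feeds back from the Navier--Stokes estimate, yet the very same choice inflates the coefficient $K_1$ multiplying the singular kernel on the right-hand side. The scheme closes only because $g_{2\alpha-1}\in L^1(0,T)$, which forces $\alpha>\tfrac12$; at or below the threshold $\alpha=\tfrac12$ the kernel is non-integrable and the Gronwall loop cannot be closed along these lines. Removing the restriction $\alpha\in(\tfrac12,1)$ would therefore require a genuinely different argument.
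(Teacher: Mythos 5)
Your argument matches the paper's own proof: both add the bounds from \cref{Lem:EstU} and \cref{Lem:EstPhi}, choose the free parameter $\gamma$ large enough to absorb the $\|\hphi_k\|_{L^2_t\hY}^2$ feedback from the Navier--Stokes estimate, invoke the integrability of $g_{2\alpha-1}$ for $\alpha>\tfrac12$, close with an $L^1$-weighted Gronwall inequality, and finish with stability of the projections $\Pi_{\mathcal{H}_k}$, $\Pi_{\hY_k}$. The only small quibble is that the factor $\gamma$ is already built into \eqref{Est:SolFP} as stated (that lemma holds for arbitrary $\gamma>0$), so one simply fixes that $\gamma$ rather than multiplying the inequality by an additional constant.
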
 

\begin{proof}
We add the integrated velocity inequality \cref{Eq:EnergyU2} to the bound \cref{Est:SolFP} and obtain the following combined bound:
$$\begin{aligned} &\frac{\gamma}{2} \big(\gb* \|\hphi_k-\init \ga\|_\hY^2\big)(t) + \frac{\gamma\, T^{-\alpha}}{16\,\Gamma(1-\alpha)} \|\hphi_k\|_{L^2_t\hY}^2  +\frac{\gamma}{2\lambda} \|\nablaq \hphi_k\|_{L^2_t\hY}^2 + \gamma \eps \| \nablax \hphi_k\|_{L^2_t \hY}^2  \\ &\quad + \frac12 \|u_k(t)\|_{\mathcal{H}}^2 + \frac{\nu}{2} \|\nabla u_k\|_{L^2_t{\mathcal{H}}}^2 \\
&\leq C(\alpha,\gamma) \|M^{1/2} \init\|_{H^1(D;W^{1,\infty}(\Omega)) }^2  \int_0^t g_{2\alpha-1}(s) \|u_k(s)\|_{\mathcal{H}}^2 \ds
+ C(\alpha,\gamma,T)\| \init\|_\hX^2 \\&\quad + \frac12 \|u_k^0\|_{\mathcal{H}}^2 + %
\frac{ T^{2-2\alpha} k_B^2\mu_T^2}{2\nu (1-\alpha)^2}  \|\hphi_k\|_{L^2_t\hY}^2.
\end{aligned}$$
We now choose $\gamma$ such that 
$$\frac{\gamma\, T^{-\alpha}}{16\, \Gamma(1-\alpha)} \geq \frac{ T^{2-2\alpha} k_B^2\mu_T^2}{\nu (1-\alpha)^2}.$$
Hence, we can absorb the last term on the right-hand side into the second term on the left-hand side, and 
 the combined energy inequality thus becomes
\begin{equation} \label{Eq:EnergyBack}\begin{aligned} & \big(\gb* \|\hphi_k-\init \ga\|_\hY^2\big)(t) +  \|\hphi_k\|_{L^2_t\hX}^2    + \|u_k(t)\|_{\mathcal{H}}^2 +  \|\nabla u_k\|_{L^2_t{\mathcal{H}}}^2 \\
&\quad\leq C(\alpha,T) \|M^{1/2} \init\|_{H^1(D;W^{1,\infty}(\Omega)) }^2  \int_0^t g_{2\alpha-1}(s) \|u_k(s)\|_{\mathcal{H}}^2 \ds
\\ &\qquad + C(\alpha,T) \big( \| \init\|_\hX^2 + \|u_k^0\|_{\mathcal{H}}^2\big),
\end{aligned}\end{equation}
where we took the minimum of each prefactor of the norms on the left-hand side and divided the inequality by this value. Gronwall's lemma then implies that
\begin{equation} 
\begin{aligned} \label{Eq:EnergyBack1}
& \big(\gb* \|\hphi_k-\init \ga\|_\hY^2\big)(t) +  \|\hphi_k\|_{L^2_t\hX}^2    + \|u_k(t)\|_{\mathcal{H}}^2 +  \|\nabla u_k\|_{L^2_t{\mathcal{H}}}^2 \\
&\quad \leq C(\alpha,T) \cdot \big(  \| \init\|_\hX^2  + \|u_k^0\|_{\mathcal{H}}^2  \big) \cdot \textup{exp}\bigg(\frac{T^{2\alpha-1}}{2\alpha-1} \|M^{1/2} \init\|^2_{H^1(D;W^{1,\infty}(\Omega)) }\bigg).
\end{aligned}
\end{equation}

We note that the initial conditions of the Galerkin system are defined by $u_k^0=\Pi_{\mathcal{H}_k} u^0$ and $\init=\Pi_{\hY_k} \hpsi^0$. Therefore, we have that $\|u_k^0\|_{\mathcal{H}}^2 \leq \|u^0\|_{\mathcal{H}}^2$ and $$\|M^{1/2}\init\|_{H^1(D;W^{1,\infty}(\Omega)) }^2 \leq C \|M^{1/2}\hpsi^0\|_{H^1(D;H^{1+d/2+\delta}(\Omega)) }^2.$$
We insert these bounds into the right-hand side of the inequality \cref{Eq:EnergyBack1} and we thus arrive at the desired $k$-uniform energy estimate \cref{Eq:EnergyIneq}.
\end{proof}

\subsection{Convergence of subsequences} 
Having derived the $k$-uniform energy estimate \cref{Eq:EnergyIneq} stated in \cref{Lem:EstComb},
we shall extract weakly/weakly-$*$ converging subsequences of Galerkin solutions $(u_k,\hphi_k)$. We shall also prove strong convergence of a subsequence $u_{k_j}$ in $L^2(0,T;\mathcal{H}_0)$ in order to pass to the limit $j\to \infty$ in the nonlinear terms in the variational Navier--Stokes--Fokker--Planck system.   

\begin{lemma} Let \cref{Ass:WellPosedness} hold and assume that $r \in [1,\infty)$ for $d=2$ and $r \in [1,6)$ for $d=3$; then, the sequence of Galerkin solutions $(u_k,\hphi_k)$ from \cref{Eq:LemmaExistenceODE} contains a subsequence $(u_{k_j},\hphi_{k_j})$ that admits the following convergences as $j \to \infty$:
	\begin{equation} \label{Eq:Weak} \begin{aligned}	
			u_{k_j} &\longweak u &&\text{weakly-$*$ in } L^\infty(0,T;\mathcal{H}_0), \\
			u_{k_j} &\longweak u &&\text{weakly\phantom{-*} in } L^2(0,T;\mathcal{V}_0) \cap L^{8/d}(0,T;L^4(\Omega)^d), \\
			\hphi_{k_j} &\longweak 		 \hphi &&\text{weakly\phantom{-*} in }
			L^2(0,T;\hX), \\
		\pt u_{k_j} &\longweak \pt u &&\text{weakly\phantom{-*} in } L^{8/(4+d)}(0,T;\mathcal{V}_0'), \\
		u_{k_j} &\longrightarrow  u &&\text{strongly\hspace{1mm} in } L^2\big(0,T;L^{r}(\Omega;\R^d)\big), \\
		u_{k_j} &\longrightarrow  u &&\text{strongly\hspace{1mm} in } C([0,T];\mathcal{V}_0'),\\
		\pt^{\alpha} \hphi_{k_j} &\longweak \pt^{\alpha} \hphi &&\text{weakly\phantom{-*} in } L^{8/(4+d)}(0,T;\hZ'),\\
		\hphi_{k_j} &\longrightarrow  \hphi &&\text{strongly\hspace{1mm} in } L^2(0,T;\hY),
\\ 
\C(M \gb * \hphi_{k_j}) &\longrightarrow \C(M\gb *\hphi) &&\text{strongly\hspace{1mm} in }
		L^2\big(0,T;L^2(\Omega;\R^{d\times d})\big), \\
		\gb*\hphi_{k_j} &\longweak \gb *\hphi &&\text{weakly-$*$ in } L^\infty(0,T;\hY) \cap L^2(0,T;\hX), \\
		\gb*\hphi_{k_j} &\longrightarrow \gb *\hphi &&\text{strongly\hspace{1mm} in }
		C([0,T];\hX') \cap L^2(0,T;\hY).
		\end{aligned}
	\end{equation} 
	\end{lemma}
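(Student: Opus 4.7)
The plan is to extract subsequences in successive stages, relabelling $k_j$ each time to avoid cluttering the notation. The $k$-uniform energy estimate \cref{Eq:EnergyIneq} from \cref{Lem:EstComb} immediately yields uniform boundedness of $\{u_k\}$ in $L^\infty(0,T;\mathcal{H}_0)\cap L^2(0,T;\mathcal{V}_0)$ and of $\{\hphi_k\}$ in $L^2(0,T;\hX)$. The Banach--Alaoglu theorem together with reflexivity then yields the weak-$*$ and weak convergences of a subsequence $(u_{k_j},\hphi_{k_j})$ to limits denoted $u$ and $\hphi$. A standard Gagliardo--Nirenberg interpolation in space, combined with the $L^\infty L^2$ and $L^2 H^1$ bounds on $u_k$, produces a uniform bound in $L^{8/d}(0,T;L^4(\Omega)^d)$ and hence, up to a further subsequence, weak convergence in this space.

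Next, I would derive bounds on the time derivatives. To bound $\pt u_k$ in $L^{8/(4+d)}(0,T;\mathcal{V}_0')$, the discrete Navier--Stokes equation \cref{Eq:NS_dis} is tested against $\Pi_{\mathcal{H}_k}v$ for arbitrary $v\in \mathcal{V}_0\cap H^{1+d/2+\delta}(\Omega)^d$, exploiting the $W^{1,\infty}$-stability of the projection recalled earlier in the excerpt. H\"older's inequality applied to the convective term (using $u_k\in L^{8/d}(0,T;L^4)$), together with the viscous bound and the stress estimate \cref{Eq:C}, delivers the required $L^{8/(4+d)}$ bound by duality with $H^{1+d/2+\delta}$. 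The fractional derivative $\pta\hphi_k$ is controlled analogously: \cref{Eq:FP_dis} is tested against $\hat\zeta\in\hZ$, the integration-by-parts identity \cref{Eq:IntParts} is used to recast the corotational term, and the $L^{8/d}(0,T;L^4)$ regularity of $u_k$ together with $\hphi_k\in L^2(0,T;\hX)$ is used to bound every term by $\|\hat\zeta\|_\hZ$, yielding a uniform $L^{8/(4+d)}(0,T;\hZ')$ bound on $\pta\hphi_k$.

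Strong convergences then follow from Aubin--Lions-type compactness. With the Gelfand triple $\mathcal{V}_0\com\mathcal{H}_0\con\mathcal{V}_0'$ (the compact embedding being Rellich--Kondrachov), the first variant of \cref{Eq:aubin} applied to $u_k$ produces $u_{k_j}\to u$ strongly in $L^2(0,T;\mathcal{H}_0)$; this upgrades to $L^2(0,T;L^r(\Omega;\R^d))$ via the compact Sobolev embedding $\mathcal{V}_0\com L^r$ for the indicated range of $r$. The second variant of \cref{Eq:aubin}, with $r=8/(4+d)>1$, yields $u_{k_j}\to u$ in $C([0,T];\mathcal{V}_0')$. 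For $\hphi_k$, the fractional Aubin--Lions lemma \cref{Eq:aubinfractional2}, applied with $(V,H,Z)=(\hX,\hY,\hZ')$, $p=2$, $r=8/(4+d)$ and fractional order $\alpha\in(\tfrac12,1)$, delivers strong convergence $\hphi_{k_j}\to\hphi$ in $L^2(0,T;\hY)$.

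The properties of $\gb*\hphi_{k_j}$ follow from Young's convolution inequality: since $\gb\in L^1(0,T)$, convolution with $\gb$ acts continuously on every $L^p(0,T;H)$, so boundedness of $\hphi_k$ transfers to $\gb*\hphi_k$ and gives the weak-$*$ and weak convergences with the correct limit by linearity. Strong convergence in $L^2(0,T;\hY)$ is immediate from the strong $L^2(0,T;\hY)$ convergence of $\hphi_{k_j}$, while strong convergence in $C([0,T];\hX')$ is obtained by applying the second variant of \cref{Eq:aubin} with $V=\hY$, $H=\hX'$, $Z=\hZ'$, using $\pt(\gb*\hphi_{k_j})=\pta\hphi_{k_j}\in L^{8/(4+d)}(0,T;\hZ')$. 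Finally, the Kramers bound \cref{Eq:C} gives $\|\C(M\gb*(\hphi_{k_j}-\hphi))\|_{L^2(\Omega)}\leq C\|\gb*(\hphi_{k_j}-\hphi)\|_\hY$, from which the stress convergence in $L^2(0,T;L^2(\Omega;\R^{d\times d}))$ follows at once. The principal obstacle is securing the fractional-derivative bound $\pta\hphi_k\in L^{8/(4+d)}(0,T;\hZ')$ with constants uniform in $k$: the integration-by-parts identity \cref{Eq:IntParts} generates terms involving the mixed second-order derivative $\nablax\nablaq\hat\zeta$, which is why the test space is chosen to be $\hZ$ rather than $\hX$; this is the essential new ingredient needed to apply \cref{Eq:aubinfractional2} and hence to pass to the limit in the nonlinear stress term.
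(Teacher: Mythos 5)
Your proposal follows the paper's structure (weak limits from the energy estimate via Banach--Alaoglu, dual-space bounds on $\pt u_k$ and $\pta\hphi_k$ via projection testing, strong limits via integer and fractional Aubin--Lions, and continuity of $\C$), and most steps are handled correctly, including the key observation that the test space for the Fokker--Planck dual must be $\hZ$ rather than $\hX$ because of the $\nablax\nablaq\hzeta$ terms produced by the integration by parts.

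However, there is a genuine gap in the passage ``since $\gb\in L^1(0,T)$, convolution with $\gb$ acts continuously on every $L^p(0,T;H)$, so boundedness of $\hphi_k$ transfers to $\gb*\hphi_k$ and gives the weak-$*$ and weak convergences''. This is fine for the $L^2(0,T;\hX)$ bound (Young with $\gb\in L^1$ and $\hphi_k\in L^2$), but it does not yield the asserted $L^\infty(0,T;\hY)$ bound on $\gb*\hphi_k$. To obtain $\|\gb*\hphi_k\|_{L^\infty(0,T;\hY)}\lesssim\|\hphi_k\|_{L^2(0,T;\hY)}$ from Young's convolution inequality you would need $\gb\in L^2(0,T)$, but $\gb(t)=t^{-\alpha}/\Gamma(1-\alpha)\notin L^2(0,T)$ precisely for the range $\alpha\in(\tfrac12,1)$ assumed in \cref{Ass:WellPosedness}. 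More generally, with $\gb\in L^p$ for $p<1/\alpha$ and $\hphi_k\in L^2$, Young only gives $\gb*\hphi_k\in L^r$ with $1/r=1/p-1/2>\alpha-1/2>0$, so $r<\infty$. The $L^\infty(0,T;\hY)$ control therefore does not come from $L^2$-boundedness of $\hphi_k$ alone. The paper derives it from the specific term $\sup_{t}(\gb*\|\hphi_k-\init\ga\|_\hY^2)(t)$ which appears (and is bounded uniformly in $k$) in the energy estimate \cref{Eq:EnergyIneq} of \cref{Lem:EstComb}; the argument splits $\gb*\|\hphi_k\|_\hY$ using the triangle inequality, applies the scalar Young inequality $ab\le a^2+\tfrac14 b^2$ inside the convolution, and uses $\gb*\ga=g_1\equiv 1$ to absorb the contribution of $\init$. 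Without this $L^\infty(0,T;\hY)$ bound, the weak-$*$ convergence of $\gb*\hphi_{k_j}$ in $L^\infty(0,T;\hY)$ is not justified, and, more consequentially, the second variant of the Aubin--Lions lemma \cref{Eq:aubin} that you invoke for the strong convergence in $C([0,T];\hX')$ is not applicable, since it requires uniform $L^\infty(0,T;V)$ control with $V=\hY$. You need to go back to the energy estimate and use the fractional convolution term that was retained there precisely for this purpose.

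A minor additional remark: in your bound on $\pt u_k$ you test against $v\in\mathcal{V}_0\cap H^{1+d/2+\delta}(\Omega)^d$ and conclude ``by duality with $H^{1+d/2+\delta}$''. This delivers $\pt u_k$ bounded in $L^{8/(4+d)}(0,T;(\mathcal{V}_0\cap H^{1+d/2+\delta})')$, which is a larger dual space than the $L^{8/(4+d)}(0,T;\mathcal{V}_0')$ asserted in the statement. That weaker bound still suffices for the Aubin--Lions argument, but to conclude the weak convergence in $L^{8/(4+d)}(0,T;\mathcal{V}_0')$ as stated you need, as the paper implicitly uses, stability of $\Pi_{\mathcal{H}_k}$ in $\mathcal{V}$ (and in $L^4(\Omega)^d$) so that the duality closes up against $v\in\mathcal{V}_0$ rather than against the smaller space $\mathcal{V}_0\cap H^{1+d/2+\delta}$.
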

\begin{proof}
In Lemma \ref{Lem:EstComb} we stated various $k$-uniform bounds on $u_k$ and $\hphi_k$. Thanks to the Banach--Alaoglu and Eberlein--\v{S}mulian theorems, see \cite[Theorem 8.10]{alt2016linear}, there are weakly/weakly-$*$ converging subsequences $u_{k_j}$ and $\hphi_{k_j}$. In particular, we obtain the convergences
\begin{equation} \label{Eq:Weak1} \begin{aligned}	
u_{k_j} &\longweak u &&\text{weakly-$*$ in } L^\infty(0,T;\mathcal{H}_0), \\
u_{k_j} &\longweak u &&\text{weakly\phantom{-*} in } L^2(0,T;\mathcal{V}_0), \\
\hphi_{k_j} &\longweak 		 \hphi &&\text{weakly\phantom{-*} in }
L^2(0,T;\hX). 
\end{aligned}\end{equation}

We shall establish the strong convergence of $u_{k_j}$ in $L^2(0,T;\mathcal{H}_0)$ by applying the Aubin--Lions compactness lemma; see \cref{Eq:aubin}. To this end, we need to bound the time derivative of $u_k$ in a suitable dual space. Let us therefore consider an arbitrary element $v \in L^{8/(4-d)}(0,T;\mathcal{V}_0)$ and bound each of the terms appearing on the right-hand side of \cref{Eq:NS_dis} below by means of H\"older's inequality:
$$\begin{aligned}\int_0^T | \langle \pt u_k,v \rangle_{\mathcal{V}_0}| \dt   &= \int_0^T  \Big| -((u_k \cdot \nablax)u_k,\Pi_{\mathcal{H}_k} v)_{\mathcal{H}}  \\ 
&\quad  - \nu (\nablax u_k,\nablax \Pi_{\mathcal{H}_k} v)_{\mathcal{H}} - k_B \mu_T \big(\C(M\gb*\hphi_k),\nablax \Pi_{\mathcal{H}_k} v \big)_{\mathcal{H}} \Big| \dt
	\\ 
& \leq C \int_0^T \Big( \|u_k\|_{L^4(\Omega)} \|u_k\|_{\mathcal{V}} \|\Pi_{\mathcal{H}_k} v\|_{L^4(\Omega)}  \\ &\quad  + \|u_k\|_{\mathcal{V}} \|\Pi_{\mathcal{H}_k} v\|_{\mathcal{V}} +  \|\C(M\gb*\hphi_k)\|_{\mathcal{H}}  \|\Pi_{\mathcal{H}_k} v\|_{\mathcal{V}} \Big) \dt. 
\end{aligned}$$
Hence, using Ladyzhenskaya's inequality, we have that
$$\begin{aligned}
&\int_0^T | \langle \pt u_k,v \rangle_{\mathcal{V}_0}| \dt\\  
 &\leq C \int_0^T \Big( \|u_k\|_{\mathcal H}^{1-d/4} \|u_k\|_{\mathcal{V}}^{1+d/4} \|\Pi_{\mathcal{H}_k} v\|_{L^4(\Omega)} + \|u_k\|_{\mathcal{V}} \|\Pi_{\mathcal{H}_k} v\|_{\mathcal{V}} +  \|\hphi_k\|_\hY  \|\Pi_{\mathcal{H}_k} v\|_{\mathcal{V}} \Big) \dt 
 \\
&\leq C \Big( \|u_k\|_{L^\infty {\mathcal{H}}}^{1-d/4} \|u_k\|_{L^2{\mathcal{V}}}^{1+d/4} \|v\|_{L^{8/(4-d)} {\mathcal{V}}} + \|u_k\|_{L^2{\mathcal{V}}} \|v\|_{L^2{\mathcal{V}}} +  \|\hphi_k\|_{L^2\hY} \|v\|_{L^2{\mathcal{V}}} \Big)
	\\
	&\leq C\|v\|_{L^{8/(4-d)} {\mathcal{V}}}.
\end{aligned}$$
This then implies that $\pt u_k$ is bounded in $L^{8/(4+d)}(0,T;\mathcal{V}_0
 ')$. 
It follows by the Aubin--Lions lemma \cref{Eq:aubin} that  %
\begin{equation} \label{Eq:Weak2} \begin{aligned}	
\pt u_{k_j} &\longweak \pt u &&\text{weakly\phantom{-*} in } L^{8/(4+d)}(0,T;\mathcal{V}_0'), \\
u_{k_j} &\longrightarrow  u &&\text{strongly\hspace{1mm} in } L^2\big(0,T;L^{r}(\Omega;\R^d)\big), \\
u_{k_j} &\longrightarrow  u &&\text{strongly\hspace{1mm} in } C([0,T];\mathcal{V}_0'),
\end{aligned}\end{equation}
where $r \in [1,\infty)$ for $d=2$ and $r \in [1,6)$ for $d=3$.

Similarly, we consider an arbitrary element $\hzeta \in L^\frac{8}{4-d}(0,T;\hZ)$ and we recall that $\hZ$ was defined in the beginning of \cref{Sec:Maxwell}. We test the Galerkin equation of $\hphi_k$ with $\Pi_{\mathcal H_k} \hzeta$ giving
\begin{equation} \label{Eq:BoundDerivative} \begin{aligned}\int_0^T \!\! |\langle \pta \hphi_k,\Pi_{\mathcal H_k} \hzeta \rangle_{\hX}| \dt  &=\!
\int_0^T\!\! \Big| -((u_k \cdot \nablax) \hphi_k, \Pi_{\mathcal H_k} \hzeta)_\hY - \frac{1}{2\lambda} (\nablaq \hphi_k,\nablaq \Pi_{\mathcal H_k} \hzeta)_\hY   \\ &\quad - \eps (\nablax \hphi_k,\nabla_x \Pi_{\mathcal H_k} \hzeta)_\hY+ (\omega(u_k) q \hphi_k, \nablaq \Pi_{\mathcal H_k} \hzeta)_\hY \Big|\d t.\end{aligned} \end{equation}
We note that $u_k$ is bounded in $L^{8/d}(0,T;L^4(\Omega)^d)$ by the following interpolation result
$$\int_0^T \|u_k\|_{L^4}^{8/d} \dt \leq \int_0^T  \|u_k\|_{\mathcal H}^{8/d-2} \|u_k\|^{2}_{\mathcal V} \dt \leq \|u_k\|_{L^\infty \mathcal{H}}^{8/d-2} \|u_k\|_{L^{2} \mathcal{V}}^{2}.$$
Regarding the last term in \eqref{Eq:BoundDerivative}, we integrate by parts and estimate by H\"older's inequality
$$\begin{aligned} -  &\big(\omega(u_k)  q \hphi, \nablaq \hzeta\big)_\hY \\ &=  \big(\nablax \hphi_k (\nablaq \hzeta)^{\mathrm{T}} q,u_k\big)_\hY + \big( \hphi_k \nablax \nablaq \hzeta q,u_k\big)_\hY + \big(u_k \cdot q, \nablax \hphi_k \cdot \nablaq \hzeta\big)_\hY \\ &\quad  + \big(u_k \cdot q, \hphi_k \div_x \nablaq \hzeta\big)_\hY \\
	&\leq C  \| \nablax \hphi_k\|_{L^2\hY}
	\big( \|\nablaq \hzeta\|_{L^{8/(4-d)} \hY}  +  \|\nablax \nablaq \hzeta\|_{L^{8/(4-d)} \hY}  \big) \|q\|_{L^\infty} \|u_k\|_{L^{8/d} L^4}  \\ &\quad+ \| \hphi_k\|_{L^2\hX} \|\nablax \nablaq \hzeta\|_{{L^{8/(4-d)} \hY}} \|q\|_{L^\infty} \|u_k\|_{L^{8/d} L^4}  \\ &\quad + C \|u_k\|_{L^{8/d} L^4} \|q\|_{L^\infty} \|\nablax \hphi_k\|_{L^2 \hY} \big(\|\nablaq \hzeta\|_{L^{8/(4-d)}\hY}  + \|\nablax \nablaq \hzeta\|_{L^{8/(4-d)}\hY} \big) \\ &\quad +\|u_k\|_{L^{8/d} L^4} \|q\|_{L^\infty}  \|\hphi_k\|_{L^2 \hX} \|\div_x \nablaq \hzeta\|_{L^{8/(4-d)}\hY}  \\
	&\leq C \|u_k\|_{L^{8/d} L^4} \|q\|_{L^\infty}  \|\hphi_k\|_{L^2 \hX} \| \hzeta\|_{L^{8/(4-d)}(0,T;\hZ) } .
\end{aligned} $$
Using this estimate, we can bound \eqref{Eq:BoundDerivative} as follows:
\begin{equation} \label{Eq:BoundDerivativePhi} \begin{aligned}& \int_0^T |\langle \pta \hphi_k,\Pi_{\mathcal H_k} \hzeta \rangle_{\hX}| \dt  \\ &\leq C \Big( \|u_k \|_{L^\infty\mathcal{H}_0 }  \|\hphi_k \|_{L^2 \hX}   \|\hzeta \|_{L^2\hX} +  \|\nablaq \hphi_k\|_{L^2\hY}   \|\nablaq \hzeta \|_{L^2\hY}    \\ &\quad +  \| \nablax \hphi_k\|_{L^2\hY}   \|\nabla_x\hzeta \|_{L^2\hY} + \|u_k\|_{L^{8/d} L^4} \|q\|_{L^\infty}  \|\hphi_k\|_{L^2 \hX} \| \hzeta\|_{L^{8/(4-d)}(0,T;\hZ) } \Big) \\
		&\leq  C \|\hzeta\|_{L^{8/(4-d)}(0,T;\hZ) } .
\end{aligned} \end{equation}
Hence, we obtain the $k$-uniform boundedness of $\pt(\gb*\hphi_{k})=\pt^{\alpha} \hphi_k$ in the space $L^{8/(4+d)}(0,T;\hZ')$, which is continuously embedded  in $L^{8/(4+d)}(0,T;(H^2_M(\Omega \times D))')$. 
Therefore, we are in the setting of the Gelfand triple
$$\hX \com \hY\hookrightarrow \big( H_M^2(\Omega \times D)\big)'.$$ 
We thus obtain from the fractional Aubin--Lions lemma, see \eqref{Eq:aubinfractional2}, that
\begin{equation} \label{Eq:Weak3} \begin{aligned}	
		\pt^{\alpha} \hphi_{k_j} &\longweak \pt^{\alpha} \hphi &&\text{weakly\phantom{-*} in } L^{8/(4+d)}(0,T;\hZ'), \\
		\hphi_{k_j} &\longrightarrow  \hphi &&\text{strongly\hspace{1mm} in } L^2(0,T;\hY).
\end{aligned} \end{equation}

The convolution $\gb*\hphi_{k}$ is bounded in $L^2(0,T;\hX)$ thanks to Young's convolution inequality 
$$\|\gb * \hphi_k\|_{L^2_t\hX} \leq \|g_{1-\alpha}\|_{L^1_t} \|\hphi_{k}\|_{L^2_t\hX} \leq C T^{1-\alpha} \|\hphi_{k}\|_{L^2_t\hX}.$$
Moreover,  $\gb*\hphi_{k}$ is bounded in $L^\infty(0,T;\hY)$ by the following chain of estimates:
$$\begin{aligned} &\|\gb*\hphi_{k}\|_{L^\infty \hY} \\&\quad \leq \sup_{t \in (0,T)} \int_0^t \gb(t-s) \|\hphi_{k}(s)\|_\hY \ds \\ 
	&\quad \leq \sup_{t \in (0,T)} \int_0^t \gb(t-s) \|\hphi_{k}(s)-\hpsi_{k}^0 \ga(s)\|_\hY \ds + (\gb*\ga)(t) \|\hpsi_{k}^0\|_\hY   \\ 
	&\quad \leq   \sup_{t \in (0,T)} \int_0^t \gb(t-s) \|\hphi_{k}(s)-\hpsi_{k}^0 \ga(s)\|^2_\hY \ds + \frac14 \int_0^t \gb(t-s) \ds + \|\hpsi_{k}^0\|_\hY  
	\\ &\quad = \sup_{t \in (0,T)} (\gb*\|\hphi_{k}-\hpsi_{k}^0 \ga\|_\hY^2)(t) + \frac14 g_{2-\alpha}(T) + \|\hpsi_{k}^0\|_\hY,
\end{aligned}$$
and the first term on the right-hand side is bounded by \cref{Lem:EstComb}. Since we have already proved a bound on $\pt(\gb*\hphi_{k})=\pta \hphi_{k}$, see \eqref{Eq:BoundDerivativePhi}, we may use the Aubin--Lions lemma, see \cref{Eq:aubin}, to obtain the following strong convergence results: 
\begin{equation} \label{Eq:Weak5} \begin{aligned}	
		\gb*\hphi_{k_j} &\longrightarrow  \gb*\hphi &&\text{strongly\hspace{1mm} in } L^2(0,T;\hY), \\
		\gb*\hphi_{k_j} &\longrightarrow  \gb*\hphi &&\text{strongly\hspace{1mm} in } C([0,T];\hX').
\end{aligned} \end{equation}
Lastly, we note that the mapping $M\gb*\varphi \mapsto \C(M\gb*\varphi)$ is linear and continuous thanks to \eqref{Eq:C},  
and therefore we have from \eqref{Eq:Weak5}$_1$ that
\begin{equation} \label{Eq:Weak4} \C(M \gb * \hphi_{k_j}) \longrightarrow \C(M\gb *\hphi) \quad \text{ strongly in }
L^2\big(0,T;L^2(\Omega;\R^{d\times d})\big).
\end{equation} 
\end{proof}

\subsection{Passage to the limit}
Next, we pass to the limit $j \to \infty$ in the  time-integrated $k_j$-th Galerkin system \cref{Eq:NS_dis}, \cref{Eq:FP_dis}. Specifically, we shall use the convergence results stated in the preceding lemma to show that the weak limits, $u$ and $\hphi$, satisfy the variational Navier--Stokes--Fokker--Planck system in the sense of \cref{Eq:DefWeak}. 
\begin{proof}[Proof of \cref{Thm:WellPosedness}]
We consider the time-integrated Galerkin system
\begin{align} 
&\int_0^T \Big(\langle\pt u_{k_j},v \rangle_V  + ((u_{k_j} \cdot \nablax)u_{k_j},v)_{\mathcal{H}} \label{Eq:NS_dis_time} \\ 
& \quad + \nu (\nablax u_{k_j},\nablax v)_{\mathcal{H}}+ k \mu (C(M \gb *\hphi_{k_j}),\nablax v)_{\mathcal{H}} \Big) \eta(t) \dt =0  \notag\\
&\int_0^T  -(\gb*\hphi_{k_j},\hzeta)_\hY \eta'(t) + \Big(    ((u_{k_j}\cdot \nablax) \hphi_{k_j}, \hzeta)_\hY+ \frac{1}{2\lambda} ( \nablaq \hphi_{k_j},\nablaq \hat\zeta)_\hY \label{Eq:FP_dis_time} \\
&\quad + \eps (\nablax \hphi_{k_j},\nabla \hat\zeta)_\hY- (\omega(u_{k_j}) q \hphi_{k_j}, \nablaq \hat\zeta)_\hY \Big) \eta(t) \dt=0, \notag \end{align} for all $v \in \H_{k_j}$, $\eta \in C_0^\infty(0,T)$ and $\hat\zeta \in \hY_{k_j}$.  
Passing to the limit $j \rightarrow \infty$ in \eqref{Eq:NS_dis_time} using \cref{Eq:Weak1}--\cref{Eq:Weak4}
is standard, and results in \eqref{Eq:NS}. It therefore remains to pass to the limit $j \rightarrow \infty$ in \eqref{Eq:FP_dis_time}. In particular, the convergence of the linear terms follow immediately by weak convergence, and we only consider the two nonlinear terms. 
 We note that $\hphi_{k_j} \to \hphi$ strongly in $L^2\big(0,T;\hY)$ and $\omega(u_{k_j}) \to \omega(u)$ weakly in $L^2(0,T;\H_0)$, from which we deduce that
$$\int_0^T (\omega(u_{k_j}) q \hphi_{k_j}, \nablaq \hat\zeta)_\hY  \eta(t) \dt \longrightarrow  \int_0^T (\omega(u) q \hphi, \nablaq \hat\zeta)_\hY  \eta(t) \dt,$$
as $j \to \infty$. With the same reasoning, we are able to show that
$$\int_0^T  ((u_{k_j}\cdot \nablax) \hphi_{k_j}, \hat\zeta)_{\hY} \eta(t) \dt \longrightarrow \int_0^T  ( (u \cdot\nablax) \hphi, \hat\zeta)_{\hY} \eta(t) \dt\quad \mbox{as $j \to \infty$}.$$

We use the density of $\cup_{k=1}^\infty \H_{k}$ in $\mathcal{V}$ and of $\cup_{k=1}^\infty \hY_{k}$ in $H^2_M(\Omega \times D)$, which completes the proof by observing that the tuple $(u,\hphi)$ satisfies the variational form of the time-fractional Navier--Stokes--Fokker--Planck system as stated in \cref{Eq:DefWeak}. 

It remains to check that the initial conditions are satisfied. First, we obtain the convergence $u_{k_j}(0) \to u(0)$ in $\mathcal{V}_0'$  as $j \rightarrow \infty$; see again \cref{Eq:Weak}. However, by definition, $u_{k_j}(0)=\Pi_{\H_{k_j}} u^0$, which converges to $u^0$ in $\mathcal{H}_0$
as $j \rightarrow \infty$. By the uniqueness of the limit it follows that $u(0)=u^0$. Regarding the solution of the Fokker--Planck equation, we use again the strong convergence \cref{Eq:Weak} to conclude $(\gb*\hphi)(0) = \hpsi^0$.
\end{proof}

	Having proved that a weak solution tuple $(u,\hphi)$ to the time-fractional system in the sense of \cref{Eq:DefWeak} exists, we return to the original variable $\psi:=\gb* ( M \hphi)$, whose evolution is governed by the time-fractional Fokker--Planck equation \cref{Eq:DerivFP}. Indeed, $\gb*\psi=g_{2-2\alpha}*\phi$, which is continuous for $\alpha> 1/2$ and $(\gb*\psi)(0)=0$ as we have originally assumed in the model transformation. In this sense, we have also shown the existence of a variational solution tuple $(u,\psi)$ to the original time-fractional model.

\section*{Conclusions and outlook}
	In this paper, we investigated the well-posedness of a coupled Navier--Stokes--Fokker--Planck system with a time-fractional derivative. Such systems arise in the kinetic theory of polymeric liquid solutions with noninteracting polymer chains. We outlined the derivation of the model from a subordinated Langevin equation and considered the case of a finitely extensible nonlinear elastic (FENE-type) dumbbell model with a corotational drag term. We proved the existence of large-data global-in-time weak solutions to the corotational time-fractional model of order $\alpha \in (\tfrac12,1)$ and derived a uniform energy inequality by considering a nonstandard and novel testing procedure.
The existence of weak solutions to the general noncorotational time-fractional FENE model is an open problem, which will be studied in a forthcoming paper by using a different testing procedure; see \cite{barrett2011existence} for the integer-order setting (corresponding to $\alpha=1$). Concerning the numerical approximation of the time-fractional system considered here we refer the reader to the recent paper
\cite{beddrich2023numerical}.

{\small	
	\bibliography{literature.bib}
	\bibliographystyle{siamplain} }
	
\end{document}